\newcommand{\sbullet}{
  \hbox{\fontfamily{lmr}\fontsize{.4\dimexpr(\f@size pt)}{0}\selectfont\textbullet}}
\tikzstyle{black dot}=[fill=black, draw=black, shape=circle, minimum size=3pt, inner sep=0pt]
\tikzstyle{black dot small}=[fill=black, draw=black, shape=circle, minimum size=3pt, inner sep=0pt]
\tikzstyle{big white circle}=[fill=white, draw=black, shape=circle, minimum width=0.75cm, inner sep=0pt]
\tikzstyle{white dot big}=[fill=white, draw=black, shape=circle, inner sep=1pt]
\tikzstyle{white dot}=[fill=white, draw=black, shape=circle, minimum size=3pt, inner sep=0pt]
\tikzstyle{flat box}=[fill=white, draw=black, shape=rectangle, minimum width=2.5cm, minimum height=0.5cm]
\tikzstyle{square}=[fill=white, draw=black, shape=rectangle]
\tikzstyle{semicircle}=[fill=red, draw=black, shape=semicircle]
\tikzstyle{flat box double}=[fill=white, draw=black, shape=rectangle, minimum width=1.5cm, minimum height=0.5cm]
\tikzstyle{mid arrow}=[-, postaction={on each segment={mid-arrow}}, line width=0.75pt]
\tikzstyle{end arrow}=[->, >=latex]
\tikzstyle{red mid arrow}=[-, draw={rgb,255: red,214; green,42; blue,51}, postaction={on each segment={mid arrow}}, line width=1pt]
\tikzstyle{blue}=[-, draw={rgb,255: red,23; green,37; blue,167}, line width=1pt, dashed]
\tikzstyle{marked edge}=[-, postaction={on each segment={red semicircle}}, line width=0.75pt]
\tikzstyle{thick line}=[-, line width=0.75pt]
\tikzstyle{thick dashed black line}=[-, line width=0.75pt, dashed]
\tikzset{
  on each segment/.style={
    decorate,
    decoration={
      show path construction,
      moveto code={},
      lineto code={
        \path [#1]
        (\tikzinputsegmentfirst) -- (\tikzinputsegmentlast);
      },
      curveto code={
        \path [#1] (\tikzinputsegmentfirst)
        .. controls
        (\tikzinputsegmentsupporta) and (\tikzinputsegmentsupportb)
        ..
        (\tikzinputsegmentlast);
      },
      closepath code={
        \path [#1]
        (\tikzinputsegmentfirst) -- (\tikzinputsegmentlast);
      },
    },
  },
  mid-arrow/.style={postaction={decorate,decoration={
        markings,
        mark=at position .52 with {\arrow[#1]{stealth}}
      }}},
}
\tikzset{
  red semicircle/.style={postaction={decorate,decoration={
        markings,
        mark=at position .5 with {
		\arrow[#1]{Circle[left,fill=red,length=6pt,width=6pt]}
	}
      }}},
}
\DeclarePairedDelimiterX\setc[2]{\{}{\}}{\,#1 \;\delimsize\vert\; #2\,}
\newtheoremstyle{important-thm}
     {3pt}
     {3pt}
     {\slshape}
     {}
     {\bfseries}
     {.}
     {.5em}
     {}
\theoremstyle{plain}
\theoremstyle{important-thm}
\newtheorem{theorem}{Theorem}
\newtheorem{lemma}[theorem]{Lemma}
\newtheorem{proposition}[theorem]{Proposition}
\newtheorem{corollary}[theorem]{Corollary}
\theoremstyle{definition}
\newtheorem{remark}[theorem]{Remark}
\newtheorem{example}[theorem]{Example}
\newtheorem{definition}[theorem]{Definition}
 \numberwithin{equation}{section}
 \numberwithin{theorem}{section}
\DeclareMathOperator{\Ext}{Ext}
\DeclareMathOperator{\Hom}{Hom}
\DeclareMathOperator{\id}{id}
\renewcommand{\O}{\mathrm{O}}
\DeclareMathOperator{\SO}{SO}
\DeclareMathOperator{\Arf}{Arf}
\newcommand\Vect{{\mathcal{V}\hspace{-.5pt}ect}}
\newcommand\SVect{{\mathcal{S}\hspace{-1.5pt}\mathcal{V}\hspace{-.5pt}ect}}
\newcommand\SLine{{\mathcal{S}\hspace{-1.5pt}\mathcal{L}\hspace{-.5pt}ine}}
\renewcommand\Line{{\mathcal{L}\hspace{-.5pt}ine}}
\newcommand{\Bord}[2]{{\mathcal{B}\hspace{-.5pt}ord_{#1}^{\hspace{1pt} #2}}}
\newcommand{\SKK}[2]{{\operatorname{SKK}_{#1}^{#2}}}
\newcommand{\SKKp}[2]{{{\operatorname{SKK}'}_{#1}^{#2}}}
\newcommand{\Fun}{{\mathrm{Fun}_{\otimes}^\mathrm{inv}}}
\newcommand\spin[1]{{\mathrm{Spin}_2^{#1}}}
\newcommand{\norm}[1]{\left\|{#1}\right\|\hspace{-1.5pt}}
\newcommand{\Mod}[1]{\ \left(\text{mod}\ #1 \right)}
\newcommand\Cb            {\mathbb{C}}
\newcommand\Rb            {\mathbb{R}}
\newcommand\Zb            {\mathbb{Z}}
\newcommand\Cc            {\mathcal{C}}
\renewcommand\Dc            {\mathcal{D}}
\newcommand\Gc            {\mathcal{G}}
\newcommand\Tc            {\mathcal{T}}
\newcommand{\void}[1]{}
\newcommand\doi[2]        {\href{http://dx.doi.org/#1}{#2}}
\begin{document}

\thispagestyle{empty}
\def\thefootnote{\fnsymbol{footnote}}
\begin{flushright}
	{ZMP-HH/19-13}\\ 
	{Hamburger Beitr{\"a}ge zur Mathematik 796}
\end{flushright}
\vskip 3em
\begin{center}\LARGE
	On invertible 2-dimensional framed and $r$-spin topological field theories
\end{center}

\vskip 2em
\begin{center}
{\large 
L\'or\'ant Szegedy\footnote{Email: {\tt lorant.szegedy@univie.ac.at}}}
\\[1em]
{\small
Fachbereich Mathematik, Universit\"at Hamburg\\
Bundesstra\ss e 55, 20146 Hamburg, Germany
}
\footnote{Present address: Faculty of Physics, University of Vienna, Boltzmanngasse 5, 1090 Wien, Austria}
\end{center}

\vskip 2em

\begin{abstract}
	We classify invertible 2-dimensional framed and $r$-spin topological field theories
	by computing the homotopy groups and the $k$-invariant 
	of the corresponding bordism categories.
	The zeroth homotopy group of a bordism category is the usual Thom bordism group, 
	the first homotopy group can be identified with a Reinhart vector field bordism group,
	or the so called SKK group as observed by Ebert, B\"ockstedt--Svane and 
	Kreck--Stolz--Teichner.
	We present the computation of SKK groups for stable tangential structures.
	Then we consider non-stable examples: the 2-dimensional framed and $r$-spin SKK groups and compute them explicitly 
	using the combinatorial model of framed and $r$-spin surfaces of
	Novak, Runkel and the author.
\end{abstract}

\setcounter{footnote}{0}
\def\thefootnote{\arabic{footnote}}

\section{Introduction}

Invertible topological field theories (TFTs) have recently gained attention as
they are predicted to describe short range entangled topological phases of matter \cite{Kapustin:2014spt,Freed:2016inv}. 
The latter are defined as deformation classes of gapped Hamiltonians while the former are
symmetric monoidal functors from 
the category of $d$-dimensional bordisms with $G$-tangential structure $\Bord{d}{G}$ to 
the category of super vector spaces $\SVect$,
(or to any other symmetric monoidal category),
which land in the groupoid of super lines $\SLine$
(or respectively in the Picard subgroupoid $\Cc^{\times}$ of the target category $\Cc$).

A $G$-tangential structure on a $d$-dimensional manifold is given by a group homomorphism $G\to \O_d$ and a map from the manifold into $BG$ which lifts the classifying map of the tangent bundle.
A $G$-structure is called stable if it extends in an appropriate way to 
a $G'$-tangential structure in one dimension higher 
after stabilising the tangent bundle (Definition~\ref{def:stability-cond}).
In \cite{Freed:2016inv} invertible fully extended TFTs with stable tangential structures 
and with values in the category of super lines have been identified by maps of spectra
and these have been classified using the computational power of stable homotopy theory.

Non-extended invertible TFTs factor through the groupoid completion $||\Bord{d}{G}||$ of the 
bordism category. 
Hence the classification of non-extended invertible TFTs can be formulated by
understanding Picard groupoids and functors between them.
By a result of \cite{Sinh:1975pic,Joyal:1993bmc,Johnson:2012one}, Picard groupoids are classified by their 
zeroth and first homotopy groups and their $k$-invariant, 
i.e.\ two abelian groups and a particular group homomorphism between them.
Functors of Picard groupoids
are classified by group homomorphisms between the homotopy groups
which commute with the $k$-invariants and 
an Ext group (Theorem~\ref{thm:picard-grpd-classification}).

The zeroth homotopy group $\pi_0$ of $||\Bord{d}{G}||$
is the usual Thom bordism group $\Omega_{d-1}^{G}$.
In \cite[Sect.\,2.5]{Ebert:2009mb} it was observed that if one considers the embedded oriented bordism category, where bordisms are embedded in $\Rb^{\infty}$ (see e.g.\ \cite{Galatius:2009cob}), 
the first homotopy group $\pi_1$ can be identified with 
the Reinhart vector field bordism group \cite{Reinhart:1963vfc}.
An earlier result \cite[Thm.\,4.4]{Karras:1973SKKbook} shows that the latter is isomorphic to the
so called SKK group $\SKK{d}{\SO}$.
The group $\SKK{d}{G}$ is defined as the group completion of closed $d$-dimensional manifolds 
with $G$-structures with disjoint union as addition modulo the four-term SKK relation
(see Definition~\ref{def:SKK-group} and Appendix~\ref{app:defs-of-SKK}).
In \cite{Boekstedt:2012bord} it was shown that the first homotopy group for the embedded bordism category for arbitrary $G$-tangential structure is in fact $\SKK{d}{G}$ (they call the SKK relation the ``chimera relation'').
More recently Kreck-Stolz-Teichner noticed that this result also holds for the abstract bordism category, i.e.\ where the bordisms are not embedded in $\Rb^{\infty}$.
We present their proof of this result (Theorem~\ref{thm:SKK}) with their permission in Appendix~\ref{app:pf-thm:SKK}.
We also mention the related results on invertible oriented TFTs and SKK invariants in \cite{Rovi:2018skk}.

In case the $G$-tangential structure extends to another $G'$-tangential structure in one dimension higher
there exists a surjective group homomorphism 
$\SKK{d}{G}\twoheadrightarrow \Omega_{d}^{G'}$ to the bordism group
(Proposition~\ref{prop:SKK-bord-surj}).
In case the tangential structure is stable, by which we mean that it extends 
one dimension higher in a particular way (Definition~\ref{def:stability-cond})
the kernel of this map was computed by Kreck-Stolz-Teichner (Theorem~\ref{thm:SKK-ses} and Appendix~\ref{app:SKK-computation-stable}).

If the tangential structure in question is not stable, e.g.\ framings in dimension not equal to $1$ or $3$,
it should be possible to compute SKK groups by computing homotopy groups of Madsen-Tillmann spectra, but this is outside of the scope of this work.

In the present work we consider 2-dimensional invertible TFTs with framings and $r$-spin structures. 
The latter are tangential structures corresponding to the $r$-fold cover of $\SO_2$ 
which are not stable unless $r=1$ (which correspond to orientations) or $r=2$. 
Our main result in Theorem~\ref{thm:pi0-pi1-k-bord-r} lists the corresponding bordism groups and SKK groups explicitly\footnote{
	Some of these results appear in \cite{Randal:2011pic}, although our computations
	seem to be less technical.
} and it is proven using
the SKK relations and the combinatorial model of framed and $r$-spin surfaces 
of \cite{Novak:2015phd,Runkel:2018rs}.
Our computation follows a different strategy from that of Theorem~\ref{thm:SKK-ses}: We compute the kernel of the Euler characteristic via the SKK relations.

For a group $H$ let ${}_n H\subset H$ denote its $n$-torsion subgroup of order $n$ elements and let us introduce the shorthand $H/2=H/2H$.
Theorem~\ref{thm:pi0-pi1-k-bord-r} provides a full classification of invertible 2-dimensional framed and $r$-spin TFTs (Theorem~\ref{thm:inv-r-spin-tft-classification}): 
\begin{theorem}
	Isomorphism classes of invertible framed and $r$-spin TFTs with values in
	$\Cc$ is given by
	\begin{center}
		\begin{tabular}{c c|c}
		 \multicolumn{2}{c|}{$G$-structure} &$\pi_0(\Fun(\Bord{2}{G},\Cc))$\\
			\hline
			\multicolumn{2}{c|}{framing} & ${}_2(\pi_0(\Cc^{\times}))\hphantom{\times\pi_1(\Cc^{\times})}\times \pi_1(\Cc^{\times})/2$\\
			\multirow{2}{*}{$r$-spin}&($r$ even) & ${}_2(\pi_0(\Cc^{\times}))\times\pi_1(\Cc^{\times})\times \pi_1(\Cc^{\times})/2$\\
		      &($r$ odd) & $\hphantom{{}_2(\pi_0(\Cc^{\times}))\times} \pi_1(\Cc^{\times})\hphantom{\times\pi_1(\Cc^{\times})/2}$\\
		\end{tabular}
	\end{center}
\end{theorem}

\begin{corollary}
	In the case of $\Cc=\SVect$ we have
	\begin{center}
		\begin{tabular}{c c|c}
		 \multicolumn{2}{c|}{$G$-structure} &$\pi_0(\Fun(\Bord{2}{G},\SVect))$\\
			\hline
			\multicolumn{2}{c|}{framing} & $\Zb/2\hphantom{\times\Cb^{\times}}$\\
			\multirow{2}{*}{$r$-spin}&($r$ even) & $\Zb/2\times\Cb^{\times}$\\
		      &($r$ odd) & $\hphantom{\Zb/2\times}\Cb^{\times}$\\
		\end{tabular}
	\end{center}
	The group $\Zb/2$ is generated by the TFT $Z_\mathrm{Arf}$ computing the Arf invariant (Theorem~\ref{thm:Arf}),
	$a\in\Cb^{\times}$ correspond to the TFT $Z_{\chi}^a$ computing the Euler characteristic (Lemma~\ref{lem:eulerchar-SKK}).
\end{corollary}

The rest of the paper is organised as follows. 
In Section~\ref{sec:inv-tft} we review the notion of invertible TFTs with tangential structures, 
the classification of Picard groupoids and functors of them.
We introduce the SKK groups, and finally we present their computation in the stable case. 
In Section~\ref{sec:r-spin} we turn to dimension 2 and after a brief recollection of notions
on framed and $r$-spin surfaces we compute the corresponding bordism and SKK groups. 
This yields the classification of 2-dimensional framed and $r$-spin TFTs with arbitrary target.
Appendix~\ref{app:stable-computations} contain the proofs for the computation of the SKK groups in the stable case. 
These results were communicated to the author by Matthias Kreck, Stephan Stolz and Peter Teichner, we present the results with their kind permission.
In Appendix~\ref{app:proof} we give the proof of Lemma~\ref{lem:torus-rel} which is central 
for our computation of the framed and $r$-spin SKK groups in dimension 2.

\subsection*{Acknowledgments}
The author would like to thank Manuel Ara\'ujo, Bertram Arnold, Mark Penney, Oscar Randal-Williams, Ingo Runkel and Peter Teichner 
for helpful discussions and comments. 
The author is indebted to Matthias Kreck, Stephan Stolz and Peter Teichner for allowing them to present their results in this paper.
Finally, the author wishes to express their gratitude for the anonymous referee and the editor for providing invaluable suggestions to improve the first draft of this paper.
The author gratefully acknowledges the Max Planck Institute for Mathematics for hospitality and financial support. 
The author was partially funded by the DFG Research Training Group 1670 
``Mathematics Inspired by String Theory and Quantum Field Theory''.

\section{Invertible topological field theories}\label{sec:inv-tft}
In this section we review the notion of invertible topological field theories with tangential structure. 
Then we turn to the classification for which we need to compute bordism groups and the so called SKK groups.

Let $d\ge1$ be an integer, $G$ a topological group and $\xi:G\to \O_d$ 
a continuous group homomorphism. A \textsl{$G$-tangential structure (or $G$-structure)} on a
$d$-dimensional manifold $M$ is a homotopy class of maps $\varphi:M\to BG$ such that 
the diagram
\begin{equation}
	\begin{tikzcd}[column sep = large]
		&BG \ar{d}{B\xi}\\
		M\ar{ru}{\varphi}\ar{r}[swap]{c_{TM}}&B\O_d
	\end{tikzcd}
	\label{eq:tangential-structure}
\end{equation}
commutes up to homotopy, where $c_{TM}$ is the classifying map of the tangent bundle of $M$. 
Similarly, we define a $G$-structure on a $(d-1)$-dimensional manifold $S$ as a $G$-structure on $S\times\Rb$. 
We call a manifold with $G$-structure a \textsl{$G$-manifold}.

Consider two $(d-1)$-dimensional closed $G$-manifolds $(S_0,\varphi_0)$ and $(S_1,\varphi_1)$.
A \textsl{$d$-dimensional $G$-bordism} $(M,\varphi):(S_0,\varphi_0)\to (S_1,\varphi_1)$ is a 
compact $d$-dimensional $G$-manifold together with $G$-structure preserving embeddings $\iota_i:S_i\to M$ (boundary parametrisation)  which identify the disjoint union of $S_0$ and $S_1$ with the boundary of $M$.

The \textsl{category of $d$-dimensional $G$-bordisms} $\Bord{d}{G}$ has objects 
closed $(d-1)$-dimensional $G$-manifolds
and morphisms $G$-structure preserving diffeomorphism classes of $d$-dimensional $G$-bordisms. 
Composition of morphisms $\circ$ is defined by gluing along boundary parametrisation.
For more details on this definition see e.g.\ \cite{Turaev:2010hqft,Stolz:2011susy}.
The category $\Bord{d}{G}$ is symmetric monoidal with the disjoint union as tensor product.

Let $\Cc=(\Cc,\otimes,1_{\Cc},\sigma)$ be a symmetric monoidal category
with tensor product $\otimes$, tensor unit $1_{\Cc}$ and symmetry $\sigma$. 
A $d$-dimensional \textsl{topological field theory with $G$-structure (TFT)} 
is a symmetric monoidal functor $Z:\Bord{d}{G}\to\Cc$ \cite{Atiyah:1988tft,Segal:1988cft,Segal:1988mf}. 
\begin{definition}
  A \textsl{Picard groupoid} is a symmetric monoidal groupoid in which every object has an inverse with respect to the tensor product.
  The \textsl{Picard subgroupoid} $\Cc^{\times}\subseteq\Cc$ is the full subgroupoid of invertible objects.
  \label{def:picard-grpd}
\end{definition}
\begin{definition}
  A TFT $Z:\Bord{d}{G}\to\Cc$ is \textsl{invertible} if its image lies in
the Picard subgroupoid $\Cc^{\times}\subseteq\Cc$.
  \label{def:inv-tft}
\end{definition}
We write $\Fun(\Bord{d}{G},\Cc)$ for the category of invertible TFTs.

Denote with $||\Cc||$ the groupoid completion of $\Cc$ (for details see Definition~\ref{def:grpd-compl}). 
We have an essentially surjective functor $\Cc\to||\Cc||$.
\begin{proposition}
	\label{prop:invertible-TFT-factorises}
	For an invertible TFT $Z:\Bord{d}{G}\to\Cc$ there is a unique 
	symmetric monoidal functor $\tilde{Z}:||\Bord{d}{G}||\to\Cc^{\times}$ so that
	\begin{equation}
		\begin{tikzcd}[column sep = large]
			\Bord{d}{G}\ar{r}{Z}\ar{d}{}\ar{dr}{Z}&\Cc\\
			\|\Bord{d}{G}\|\ar[dashed]{r}[swap]{\tilde{Z}}&\Cc^{\times}\ar[right hook->]{u}{}
		\end{tikzcd}
		\label{eq:invertible-TFT-factorises}
	\end{equation}
	commutes.
\end{proposition}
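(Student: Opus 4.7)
The plan is to exploit two facts simultaneously: the invertibility of $Z$ forces its image into the Picard subgroupoid $\Cc^{\times}$, and the groupoid completion $\|\Bord{d}{G}\|$ is characterized by a universal property with respect to functors into groupoids.

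First I would observe that since $Z$ is invertible, every object in the image has a tensor inverse, so $Z$ corestricts to a symmetric monoidal functor $Z':\Bord{d}{G}\to\Cc^{\times}$ with $Z=\iota\circ Z'$, where $\iota:\Cc^{\times}\hookrightarrow\Cc$ is the inclusion. This reduces the problem to factoring $Z'$ through $\|\Bord{d}{G}\|$. Because $\Bord{d}{G}$ has duals (one can reverse bordisms), $\Cc^{\times}$ is a Picard groupoid, i.e.\ a genuine groupoid.

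Next I would invoke the universal property of the groupoid completion: the canonical functor $q:\Bord{d}{G}\to\|\Bord{d}{G}\|$ is the initial functor from $\Bord{d}{G}$ to a groupoid, in the sense that for any groupoid $\mathcal{G}$, pre-composition with $q$ induces an equivalence between $\mathrm{Fun}(\|\Bord{d}{G}\|,\mathcal{G})$ and $\mathrm{Fun}(\Bord{d}{G},\mathcal{G})$. Applied to $\mathcal{G}=\Cc^{\times}$ this produces, uniquely up to unique isomorphism, a functor $\tilde Z:\|\Bord{d}{G}\|\to\Cc^{\times}$ with $\tilde Z\circ q=Z'$. Composing with $\iota$ then yields the commuting diagram~\eqref{eq:invertible-TFT-factorises}.

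It remains to check that $\tilde Z$ inherits a symmetric monoidal structure. Since the groupoid completion of a symmetric monoidal category is again symmetric monoidal and $q$ is symmetric monoidal, one promotes the universal property to the symmetric monoidal setting: the coherence isomorphisms of $Z'$ transport along $q$ to give coherence data for $\tilde Z$, and uniqueness forces these to satisfy the hexagon and pentagon axioms. Uniqueness of $\tilde Z$ as a symmetric monoidal functor follows from the uniqueness at the level of plain functors combined with the fact that symmetric monoidal structure extending a given functor is unique when it exists. The only mild subtlety, and the point I would be most careful about, is the promotion of the $1$-categorical universal property of groupoid completion to symmetric monoidal functors; this is standard (see e.g.\ the references to Picard groupoids cited in the paper) but deserves to be stated explicitly since the whole classification programme in Section~\ref{sec:inv-tft} rests on it.
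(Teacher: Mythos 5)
The paper states Proposition~\ref{prop:invertible-TFT-factorises} without proof, treating it as a standard consequence of the universal property of the groupoid completion, and your argument is precisely that standard argument; it is essentially correct. Two small points to tidy up: first, $\Cc^{\times}$ being a groupoid holds by definition of the Picard subgroupoid and has nothing to do with $\Bord{d}{G}$ having duals --- duality in $\Bord{d}{G}$ is instead what makes $\|\Bord{d}{G}\|$ itself a \emph{Picard} groupoid, which is not needed for the factorisation. Second, to get the literal uniqueness asserted in the statement (rather than uniqueness up to unique isomorphism), one should use that $\|\Bord{d}{G}\|$, the fundamental groupoid of the classifying space, is canonically isomorphic to the strict localisation of $\Bord{d}{G}$ at all morphisms, for which precomposition with $q$ is a bijection on (symmetric monoidal) functors into a groupoid rather than merely an equivalence of functor categories.
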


If $\Cc$ has duals then $||\Cc||$ is in fact a Picard groupoid: a dual object is also an inverse.
Conversely, for an object $X$ in a Picard groupoid
its inverse $X^{-1}$ is its dual $X^{\vee}$. 
We write $\mathrm{ev}_{X}:X\otimes X^{-1}\to 1_{\Cc}$ and 
$\mathrm{coev}_{X}:1_{\Cc}\to X^{-1}\otimes X$ for the evaluation and the coevaluation morphisms.

	Let $\Cc$ be a Picard groupoid.
	The \textsl{zeroth homotopy group of $\Cc$} is 
	the abelian group $\pi_0(\Cc)$ of isomorphism classes of objects.
	The \textsl{first homotopy group of $\Cc$} is 
	the abelian group $\pi_1(\Cc)$ of automorphisms of the tensor unit of $\Cc$.
	The \textsl{$k$-invariant of $\Cc$} is the group homomorphism
	$k_{\Cc}:\pi_0(\Cc)\otimes\Zb/2\to\pi_1(\Cc)$ given by
	$k_{\Cc}(X):=\mathrm{ev}_{X}\circ\sigma_{X^{-1},X}\circ\mathrm{coev}_{X}$.

\begin{theorem}[{\cite{Sinh:1975pic,Joyal:1993bmc,Johnson:2012one}}]
	\label{thm:picard-grpd-classification}
	\begin{enumerate}
		\item 
	Picard groupoids are classified by  the zeroth and first homotopy groups $\pi_0$ and $\pi_1$ 
	and the $k$-invariant $k:\pi_0\otimes\Zb/2\to\pi_1$.

		\item 
	The set of isomorphism classes of 
	functors $\Cc\to\Dc$ of Picard groupoids
	is in bijection with the set of triples $(f_0,f_1,\alpha)$, 
	where $f_i\in\Hom(\pi_i(\Cc),\pi_i(\Dc))$  $i=0,1$ are group homomorphisms,
	which make the diagram
	\begin{equation}
		\begin{tikzcd}
			\pi_0(\Cc)\otimes\Zb/2\ar{r}{f_0}\ar{d}[swap]{k_{\Cc}}& 
			\pi_0(\Dc)\otimes\Zb/2\ar{d}{k_{\Dc}}\\
			\pi_1(\Cc)\ar{r}{f_1}&\pi_1(\Dc)
		\end{tikzcd}
		\label{eq:functor-of-Pic-grpds}
	\end{equation}
	commute
	and $\alpha\in\Ext(\pi_0(\Cc),\pi_1(\Dc))$.
	\end{enumerate}
\end{theorem}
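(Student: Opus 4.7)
The plan is to combine Proposition~\ref{prop:invertible-TFT-factorises} with the classification of functors of Picard groupoids in Theorem~\ref{thm:picard-grpd-classification}(2), applied to the target $\Cc^{\times}=\SLine$, using the explicit data for the source $\|\Bord{2}{G}\|$ supplied by Theorem~\ref{thm:pi0-pi1-k-bord-r}. Since any invertible TFT into $\SVect$ factors through $\SLine$, it suffices to classify symmetric monoidal functors $\tilde{Z}:\|\Bord{2}{G}\|\to\SLine$ for $G\in\{\mathrm{fr},\,r\text{-spin}\}$.

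First I would record the Picard-groupoid invariants of the target: $\pi_0(\SLine)=\Zb/2$ generated by the odd super line, $\pi_1(\SLine)=\Cb^{\times}$, and the $k$-invariant $k_{\SLine}\colon\Zb/2\to\Cb^{\times}$ is the Koszul sign sending the generator to $-1$. A crucial simplification is that $\Cb^{\times}$ is a divisible abelian group, hence injective, so $\Ext(A,\Cb^{\times})=0$ for every abelian group $A$; thus the extension datum $\alpha$ in Theorem~\ref{thm:picard-grpd-classification}(2) is automatically trivial and only the pair $(f_0,f_1)$ of homomorphisms remains to be determined, subject to the commutativity of diagram \eqref{eq:functor-of-Pic-grpds}.

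Next I would specialise to the two tangential structures using Theorem~\ref{thm:pi0-pi1-k-bord-r}, which furnishes $\pi_0(\|\Bord{2}{G}\|)=\Omega_1^G$, $\pi_1(\|\Bord{2}{G}\|)=\SKK{2}{G}$, and the relevant $k$-invariant, in each case. For $G=\mathrm{fr}$ and for $G=r\text{-spin}$ I would enumerate the pairs $(f_0,f_1)$ making the compatibility square commute, then equip the set of isomorphism classes of such pairs with its natural abelian group structure coming from pointwise tensor product of functors. Expected outcome: in the framed case the compatibility with the Koszul $k$-invariant pins $f_1$ down to $\pm 1$ on the distinguished $\SKK$-generator and kills every other degree of freedom, giving $\Zb/2$; in the odd $r$-spin case the $k$-invariant of the source has image of odd order, forcing $f_0=0$ and leaving the free $\Cb^{\times}$-parameter $f_1$ (the Euler TFT); in the even $r$-spin case an extra $\Zb/2$ survives, accounting for the Arf factor. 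Finally I would identify the generators obtained with honest TFTs: the Euler TFT $\Sigma\mapsto \lambda^{\chi(\Sigma)}$ for $\lambda\in\Cb^{\times}$, and the Arf TFT $\Sigma\mapsto(-1)^{\Arf(\Sigma)}$ which exists precisely when a $\Zb/2$-valued Arf invariant is defined (framings, and $r$-spin with $r$ even).

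The main obstacle is matching the abstract generators of $\Omega_1^G$ and $\SKK{2}{G}$ produced in Theorem~\ref{thm:pi0-pi1-k-bord-r} with the surfaces on which the Arf and Euler invariants are evaluated, and checking that the two functors really descend to symmetric monoidal functors into $\SLine$ with the claimed $(f_0,f_1)$. This amounts to evaluating the Euler character and the Arf invariant on the explicit SKK generators (tori with prescribed framings or $r$-spin structures, plus an Euler-type disc generator) and confirming that no additional compatible pair $(f_0,f_1)$ escapes the enumeration; this last point is where the parity dichotomy in the $r$-spin case is actually forced by diagram~\eqref{eq:functor-of-Pic-grpds}.
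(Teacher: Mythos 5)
Your proposal does not address the statement it is meant to prove. The statement here is the abstract classification of Picard groupoids by the data $(\pi_0,\pi_1,k)$ and of symmetric monoidal functors between them by triples $(f_0,f_1,\alpha)$ with $\alpha\in\Ext(\pi_0(\Cc),\pi_1(\Dc))$; in the paper this is a cited external result of S\'{\i}nh, Joyal--Street and Johnson--Osorno, not something proved from the TFT machinery. What you have written instead takes Theorem~\ref{thm:picard-grpd-classification}(2) as an \emph{input} (``combine Proposition~\ref{prop:invertible-TFT-factorises} with the classification of functors of Picard groupoids in Theorem~\ref{thm:picard-grpd-classification}(2)'') and then sketches the classification of invertible framed and $r$-spin TFTs with target $\SLine$ --- that is, the content of the introductory theorem, the corollary following Theorem~\ref{thm:SKK}, and Theorem~\ref{thm:inv-r-spin-tft-classification}. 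As an argument for the stated theorem this is circular: you cannot prove the classification of Picard groupoid functors by applying that same classification to a particular source and target.

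A genuine proof would have to operate at the level of symmetric monoidal category theory, with no reference to bordism categories at all: one must show that a Picard groupoid is determined up to symmetric monoidal equivalence by the abelian groups $\pi_0$ and $\pi_1$ together with the map $X\mapsto \ev_X\circ\sigma_{X^{-1},X}\circ\coev_X$, verify that this map is additive and $2$-torsion-valued so that it factors as a homomorphism $\pi_0\otimes\Zb/2\to\pi_1$, and then analyse monoidal natural isomorphism classes of symmetric monoidal functors, identifying the residual freedom in the choice of monoidal structure on a fixed underlying functor with $\Ext(\pi_0(\Cc),\pi_1(\Dc))$ and the compatibility constraint with the commutativity of diagram~\eqref{eq:functor-of-Pic-grpds}. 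None of these steps appears in your write-up, so the proposal has to be counted as missing the target statement entirely rather than as containing a local gap. (Your observations about $\Ext(-,\Cb^{\times})=0$ and the Koszul sign are correct, but they belong to the proof of Theorem~\ref{thm:inv-r-spin-tft-classification} and the introductory theorem, not to this one.)
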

We note that the triple
$(f_0,f_1,\alpha=0)$ determines a strict symmetric monoidal functor.
The different choices of $\alpha\in\Ext(\pi_0(\Cc),\pi_1(\Dc))$
parametrise different monoidal structures for the same underlying functor.
\begin{example}
	\label{ex:SLine}
	There are 2 symmetric braidings on the monoidal category of $\Zb/2$-graded vector spaces over $\Cb$,
	one with the usual flip map ($\Vect_{\Zb/2}$) and one which is $-1$ times the flip on purely odd components ($\SVect$).
	The corresponding Picard groupoids are
	\begin{itemize}
	  \item $\Vect_{\Zb/2}^{\times}=\Line_{\Zb/2}$ with $\pi_0=\Zb/2$, $\pi_1=\Cb^{\times}$, $k([\Cb^{0|1}])=+1$,
		\item $\SVect^{\times}=\SLine$ with $\pi_0=\Zb/2$, $\pi_1=\Cb^{\times}$, $k([\Cb^{0|1}])=-1$,
	\end{itemize}
	where $\pi_0$ is generated by the 1-dimensional odd vector space $\Cb^{0|1}$.
\end{example}
\begin{definition}
	\label{def:SKK-group}
	Let $S\in\Bord{d}{G}$, $M_i:S\to \emptyset$ and $N_i:\emptyset\to S$ for $i=1,2$
	be morphisms in $\Bord{d}{G}$.
	The \textsl{SKK group} $\SKK{d}{G}$ is the quotient of the group completion of the monoid of $G$-structure preserving diffeomorphism classes of closed 
	$d$-dimensional $G$-manifolds with disjoint union as product by
	the SKK relations which are of the form
	\begin{align}
		M_1\circ N_1 \sqcup M_2\circ N_2 \sim
		M_1\circ N_2 \sqcup M_2\circ N_1\ .
		\label{eq:def-SKK-rel}
	\end{align}
	We write $[M]\in\SKK{d}{G}$ for the class of the 
	closed $d$-dimensional $G$-manifold $M$.
\end{definition}
We present the proof of the following theorem in Appendix~\ref{app:pf-thm:SKK}.
\begin{theorem}
	\label{thm:SKK}
	For $||\Bord{d}{G}||$ the zeroth and first homotopy groups are given by
	the bordism group and the SKK group:
	\begin{align}
		\pi_0\left( ||\Bord{d}{G}|| \right)=\Omega_{d-1}^{G}
		\quad\text{and}\quad
		\pi_1(||\Bord{d}{G}||)=\SKK{d}{G}\ . 
		\label{eq:thm:SKK}
	\end{align}
\end{theorem}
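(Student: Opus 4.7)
My plan is to compute the two homotopy groups of $||\Bord{d}{G}||$ separately, using its description as the fundamental groupoid of the classifying space $B\Bord{d}{G}$. For $\pi_0$, the objects of $\Bord{d}{G}$ coincide with those of its groupoid completion, so $\pi_0$ is the set of isomorphism classes of closed $(d-1)$-dimensional $G$-manifolds in $||\Bord{d}{G}||$. Since morphisms are $G$-bordisms and the equivalence relation induced by the groupoid completion is generated by zig-zags of morphisms, I would observe that a zig-zag collapses to a single bordism: bordisms compose by gluing along a common boundary component, and are invertible \emph{as bordisms} by reversing orientation. Hence the equivalence relation on objects in $||\Bord{d}{G}||$ is exactly the bordism relation, giving $\pi_0(||\Bord{d}{G}||) = \Omega_{d-1}^{G}$.

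For $\pi_1 = \Aut_{||\Bord{d}{G}||}(\emptyset)$, the endomorphism monoid $\End_{\Bord{d}{G}}(\emptyset)$ is precisely the commutative monoid of closed $d$-dimensional $G$-manifolds under disjoint union. Since all morphisms become invertible in $||\Bord{d}{G}||$, there is a canonical surjection from the group completion of this monoid onto $\pi_1$, so it suffices to identify its kernel with the subgroup generated by SKK relations. For the easy direction --- that SKK relations hold in $\pi_1$ --- I would use a torsor argument: in the groupoid $||\Bord{d}{G}||$ the hom-set $\Hom(\emptyset, S)$ is a torsor under right multiplication by $\Aut(\emptyset) = \pi_1$, so any two morphisms $N_1, N_2 : \emptyset \to S$ differ by a unique $\alpha \in \pi_1$ with $N_2 = N_1 \circ \alpha$. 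Post-composing with $M_i : S \to \emptyset$ yields $[M_i \circ N_2] = [M_i \circ N_1] + \alpha$ for both $i = 1, 2$, and subtracting gives exactly the SKK relation \eqref{eq:def-SKK-rel}.

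The converse --- that SKK relations generate all relations in $\pi_1$ --- is the technical heart of the proof. I would approach it through the standard presentation of $\pi_1$ of the nerve of a category: generators are arbitrary morphisms (or their formal inverses) and relations come from $2$-simplices, each of which encodes an identity $g \cdot f = g \circ f$ for a composable pair. A loop based at $\emptyset$ in the nerve is a zig-zag of morphisms passing through intermediate objects, and the main task is to show that two such zig-zags representing the same element of $\pi_1$ differ by a sequence of SKK moves together with elementary composition relations. The crucial reduction step concerns changing the intermediate object along a factorisation of an endomorphism of $\emptyset$ as $M \circ N = M' \circ N'$, which is controlled by the torsor argument above and produces precisely an SKK relation. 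The main obstacle is the combinatorial bookkeeping required to handle arbitrary zig-zags through arbitrary intermediate objects, and to verify that the symmetric monoidal structure introduces no further relations beyond SKK; organising this reduction is the content of the Kreck--Stolz--Teichner analysis.
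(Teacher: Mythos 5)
The paper contains no proof of this statement to compare against: Theorem~\ref{thm:SKK} is quoted wholesale from the (still unpublished) reference \cite{Kreck:2019skk}, so your sketch can only be judged on its own terms. The parts you actually carry out are correct. For $\pi_0$, the observation that ``there exists a $G$-bordism $S_0\to S_1$'' is already an equivalence relation (cylinders, orientation reversal, gluing) and coincides with $G$-bordance does give $\pi_0(||\Bord{d}{G}||)=\Omega_{d-1}^{G}$. Your torsor argument for the easy direction of $\pi_1$ is also the right one: writing $N_2=N_1\circ\alpha$ with $\alpha\in\Aut(\emptyset)$ and using that $\Aut(\emptyset)$ is abelian (Eckmann--Hilton, since $\emptyset$ is the unit and $\sqcup$ agrees with composition on $\End(\emptyset)$) shows the relation \eqref{eq:def-SKK-rel} holds in $\pi_1$, so one gets a well-defined homomorphism $\SKK{d}{G}\to\pi_1(||\Bord{d}{G}||)$.

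The gap is that everything which makes the theorem nontrivial is asserted or deferred rather than proved. You claim the group completion of $\End_{\Bord{d}{G}}(\emptyset)$ surjects onto $\pi_1$, but this is not automatic: a loop at $\emptyset$ in the nerve is a zig-zag through arbitrary intermediate objects, and its wrong-way arrows become \emph{formal} inverses in the localization, not reversed bordisms. Your statement that bordisms are ``invertible as bordisms by reversing orientation'' is false --- $\bar\Sigma\circ\Sigma$ is a doubled bordism, not an identity cylinder --- and this failure is exactly why neither the surjectivity of $\SKK{d}{G}\to\pi_1$ nor the identification of its kernel is formal; one must relate $\Sigma^{-1}$ to $\bar\Sigma$ up to endomorphisms and control all relations coming from $2$-simplices of the nerve and from the monoidal structure. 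Your closing paragraph names these reduction steps but explicitly declines to perform them (``\,\dots is the content of the Kreck--Stolz--Teichner analysis''). So the proposal establishes only that the SKK relations are \emph{necessary} relations in $\pi_1$, not that they \emph{suffice}, which is the substance of the theorem.
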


\begin{corollary}
  Isomorphism classes of invertible TFTs with target $\SLine$ are 
  in bijection with SKK-invariants, i.e.\ with $\Hom(\SKK{d}{G},\Cb^{\times})$.
  This is because in \eqref{eq:functor-of-Pic-grpds}
  $f_1$ completely determines $f_0$,
  since the $k$-invariant of $\SLine$ is injective, 
  and that by the divisibility of $\Cb^{\times}$ the group $\Ext(\Omega_{d-1}^{G},\Cb^{\times})$ vanishes.
  \label{cor:sline-skk-inv}
\end{corollary}

	An important 
	SKK invariant is the Euler characteristic:
\begin{lemma} 
	\label{lem:eulerchar-SKK}
	The Euler characteristic is a group homomorphism:
	\begin{align}
		\chi:\SKK{d}{G}\to\Zb\ .
		\label{eq:lem:eulerchar-SKK}
	\end{align}
	Let $a\in\Cb^{\times}$.
	The corresponding invertible TFT
	\begin{align}
	  Z_\chi^a:\Bord{G}{d}\to \SVect\,,
	  \label{eq:euler-TFT}
	\end{align}
	sending every object to the tensor unit $\Cb$, evaluated on a closed $d$-dimensional $G$-manifold $M$ is
	\begin{align}
	  Z_\chi^a(M)=a^{\chi(M)}\,.
	  \label{eq:euler-TFT-value}
	\end{align}
\end{lemma}
\begin{proof}
  One checks that both sides of \eqref{eq:def-SKK-rel} have the same Euler characteristic using its elementary properties.
  On the left hand side we have
  \begin{align}
    \begin{aligned}
    &\chi( M_1\circ N_1 \sqcup M_2\circ N_2)=
    \chi( M_1\circ N_1)+\chi( M_2\circ N_2)\\
    =&
    \chi( M_1)+\chi(N_1)-\chi(S)+\chi( M_2)+\chi(N_2)-\chi(S)\ ,
    \end{aligned}
    \label{eq:lem:eulerchar-SKK-pf}
  \end{align}
  which is clearly the same as the right hand side $\chi(M_1\circ N_2 \sqcup M_2\circ N_1)$.

  By the previous corollary we need an SKK invariant valued in $\Cb^{\times}$: $[M]\mapsto a^{\chi(M)}$.
  The corresponding TFT is described in more detail in \cite{Quinnlectures}.
\end{proof}

\subsubsection*{Stable tangential structures}

Consider the situation for $\xi:G\to \O_d$ when there exists a topological group $G'$ containing $G$ as a subgroup and
a continuous group homomorphism $\xi':G'\to \O_{d+1}$ such that the following diagram commutes:
\begin{equation}
	\begin{tikzcd}
		G\ar[hookrightarrow]{r}{}\ar{d}[swap]{\xi}&G'\ar{d}{\xi'}\\
		\O_d\ar[hookrightarrow]{r}{}&\O_{d+1}\\
	\end{tikzcd}
	\label{eq:pre-stability-cond}
\end{equation}
Given a $G$-structure $\phi$ on $M$ one canonically obtains a $G'$-structure $\psi$ on $M$ by stabilising the tangent bundle and composing with the map induced by the inclusion $G\hookrightarrow G'$:
\begin{equation}
  \begin{tikzcd}
    &BG\ar[hook]{r}{}\ar{d}{B\xi}&BG'\ar{d}{B\xi'}\\
    M\ar{r}[swap]{c_{TM}}\ar{ru}{\phi}\ar[bend left= 90]{rru}{\psi}&B\O_d\ar[hook]{r}{}&B\O_{d+1}
  \end{tikzcd}
  \label{eq:GtoGp-str}
\end{equation}
This allows one to consider $(d+1)$-dimensional $G'$-bordisms and the corresponding bordism group $\Omega_d^{G'}$.

We give the proof of the following proposition in Appendix~\ref{app:SKK-bord}.
\begin{proposition}
	\label{prop:SKK-bord-surj}
	Let $G\subseteq G'$ and $\xi,\xi'$ be as in \eqref{eq:pre-stability-cond}
	There is a surjective group homomorphism
	\begin{align}
		\SKK{d}{G}\twoheadrightarrow\Omega_d^{G'}\ .
		\label{eq:prop:SKK-bord-surj}
	\end{align}
\end{proposition}

In the oriented and unoriented case the kernel of \eqref{eq:prop:SKK-bord-surj} is generated by $S^d$ \cite{Karras:1973SKKbook},
however for general $G$ the sphere may not have a $G$-structure. 
Requiring the tangential structure to be stable overcomes this difficulty 
and enables one to compute the kernel similarly as in the (un)oriented case.

\begin{definition}
  \label{def:stability-cond}
  We call tangential structures corresponding to
  $\xi:G\to \O_d$ \textsl{stable} if there exists $\xi:G'\to \O_{d+1}$ as in
  \eqref{eq:pre-stability-cond} such that the induced map  on the orbit space $G'/G\to \O_{d+1}/\O_d\cong S^d$  is a homeomorphism.
\end{definition}

Let us fix a stable tangential structure $\xi:G\to \O_d$ and a $d$-dimensional manifold $M$.
There is a bijection between isomorphism classes of $G$-structures on $M$ and isomorphism classes of
$G'$-structures over the stabilised tangent bundle of $M$ (or equivalently on $M\times\Rb$).
This can be seen by observing that defining a $G$-structure $\varphi$ for such a $G'$-structure $\psi$ is a lifting problem
\begin{equation}
  \begin{tikzcd}
    &S^d\ar{d}{}\ar{r}{\cong}&S^d\ar{d}{}\\
    &BG\ar{d}{}\ar{r}{B\xi}&B\O_d\ar{d}{}\\
    M\ar{r}[swap]{\psi}\ar{ru}{\varphi}&BG'\ar{r}{B\xi'}&B\O_{d+1}
  \end{tikzcd}
  \label{eq:GptoG-str-obs}
\end{equation}
The obstructions to such a lifting are elements in cohomology groups with coefficients in the low dimensional homotopy groups of the fiber $S^d$ of $BG\to BG'$ and hence all vanish.

For a stable $\xi:G\to \O_d$ one obtains a $G$-structure on $S^d$ by restricting the $G'$-structure on $\Rb^{d+1}$, which is unique up to isomorphism, as the normal bundle of $S^d\subset\Rb^{d+1}$ is trivial. 
We fix this $G$-structure on $S^d$ for the rest of this section.
Similarly one obtains a $G$-structure on any codimension 1 submanifold with trivial normal bundle of a $d$-dimensional $G$-manifold.

Recall the Kervaire semicharacteristic $s(m)=\sum_{i=0}^{2n}\dim(H^{2i}(M))\Mod{2}$ of a $(4n+1)$-dimensional manifold.
The proof of the following theorem is in Appendix~\ref{app:SKK-computation-stable}:

\begin{theorem}
	\label{thm:SKK-ses}
	Consider a stable tangential structure $\xi:G\to \O_d$ and the short exact sequence
	\begin{align}
		0\rightarrow K \hookrightarrow \SKK{d}{G}\twoheadrightarrow\Omega_d^{G'}\rightarrow 0\ .
		\label{eq:thm:SKK-ses}
	\end{align}
	\begin{enumerate}
	  \item The kernel $K$ is generated by $[S^d]$.
	    \label{part:1}
	  \item If $d=2n$ is even then
	    \begin{enumerate}[label=\alph*)]
	      \item $K\cong \Zb$, the isomorphism is given by the rescaled Euler characteristic $\chi/2$;
	    \label{part:2a}
	      \item \eqref{eq:thm:SKK-ses} splits if and only if $\chi(M)$ is even for every $d$-dimensional $G$-manifold $M$.
	    \label{part:2b}
	    \end{enumerate}
	    \label{part:2}
	  \item If $d=2n+1$ is odd then
	    \begin{enumerate}[label=\alph*)]
	      \item $K=0$ if there is a closed $d+1$-dimensional $G'$-manifold $W$ with $\chi(W)$ odd, and $K=\Zb/2$ otherwise;
	    \label{part:3a}
	      \item if $n$ is even and $\xi$ lands in $\SO_d$ (i.e.\ every $G$-manifold is oriented) then \eqref{eq:thm:SKK-ses} splits by the semicharacteristic.
	    \label{part:3b}
	    \end{enumerate}
	    \label{part:3}
	\end{enumerate}

\end{theorem}

\begin{remark}
\label{rem:framing-nonstable}
Note that if $G$ is the trivial group then the corresponding tangential structure is a framing which is only stable in dimension 1 or 3,
and $S^d$ has a framing only in dimensions $d=1$, 3 or 7.
Also, $r$-spin structures for $r>2$ are not stable.
\end{remark}

\section{Two-dimensional framed and \texorpdfstring{$r$}{r}-spin TFTs}\label{sec:r-spin}

In this section we introduce the notion of framed and $r$-spin surfaces and 
recall some properties of the respective bordism categories. 
We compute the corresponding SKK groups explicitly:
Instead of considering a surjective group homomorphism to a bordism group as in \eqref{eq:prop:SKK-bord-surj} and trying to determine its kernel,
we consider the (rescaled) Euler characteristic (which is an SKK invariant) and determine its kernel using a combinatorial model of framed and $r$-spin surfaces.
Finally we give the classification of invertible 2-dimensional framed and $r$-spin TFTs.

We start by sketching some definitions from \cite[Sec.\,2]{Runkel:2018rs}.
Let $r\in\Zb_{\ge0}$.
The \textsl{$r$-spin group $\spin{r}$} is the $r$-fold cover
for $r>0$ and the universal cover for $r=0$ of $\SO_2$.
We write $\xi:\spin{r}\to \SO_2$ for the covering map.
An \textsl{$r$-spin structure} on a surface $\Sigma$ is
the tangential structure on $\Sigma$ with respect to
$\xi:\spin{r}\to \SO_2\hookrightarrow \O_2$.

We will work with a skeletal version of the $r$-spin bordism category, 
which we also denote with $\Bord{2}{\spin{r}}$. It has objects 
$r$-spin circles, i.e.\ disjoint unions of pairs 
$(S^1,x)=S^1_x$, where $x\in\Zb/r$.
The morphisms are diffeomorphism classes of bordisms with $r$-spin structure.
Every $r$-spin bordism $\Sigma$ comes with two maps
$\kappa_\mathrm{in/out}:\pi_0(\partial_\mathrm{in/out}\Sigma)\to\Zb/r$
giving the types of the in- and outgoing boundary components.
(In \cite{Runkel:2018rs} the map $\lambda:\pi_0(\partial_\mathrm{in}\Sigma)\to\Zb/r$
is related to $\kappa_\mathrm{in}:\pi_0(\partial_\mathrm{in}\Sigma)\to\Zb/r$
via $\lambda=1-\kappa_\mathrm{in}$ and similarly 
the map $\mu:\pi_0(\partial_\mathrm{out}\Sigma)\to\Zb/r$
is related to $\kappa_\mathrm{out}$ via $\mu=1-\kappa_\mathrm{out}$.)

\begin{remark}
	\label{rem:zero-spin-framing}
	By \cite[Prop.\,2.2]{Runkel:2018rs} 0-spin structures correspond to framings,
	i.e. $\Bord{2}{\spin{0}}$ is equivalent to the framed 2-dimensional bordism category.
\end{remark}

\begin{proposition}[{\cite[Prop.\,2.21]{Runkel:2018rs}}]
	\label{prop:existence-of-r-spin-str}
	Let $\Sigma$ be a connected bordism of genus $g$ with 
	$b_\mathrm{in}$ ingoing and $b_\mathrm{out}$ outgoing 
	boundary components and
	$\kappa^\mathrm{in/out}:\pi_0(\partial\Sigma)^\mathrm{in/out}\to\Zb/r$.
	There exist $r$-spin structures on 
	$\Sigma$ if and only if
	\begin{align}
		\chi(\Sigma)=2-2g-b_\mathrm{in}-b_\mathrm{out}\equiv
		\sum_{j=1}^{b_\mathrm{out}}\kappa^\mathrm{out}(j)
		-\sum_{l=1}^{b_\mathrm{in}}\kappa^\mathrm{in}(l)
		\Mod{r}\ .
		\label{eq:prop:existence-of-r-spin-str}
	\end{align}
\end{proposition}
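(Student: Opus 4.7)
My plan is to work with the combinatorial presentation of $r$-spin surfaces from \cite{Novak:2015phd,Runkel:2018rs}. In that framework, after choosing a PLCW decomposition of $\Sigma$ compatible with the boundary (so that each boundary component is a cycle of edges and the $r$-spin type on that component is encoded by a single edge label), an $r$-spin structure is specified by a labelling $\epsilon\colon E\to\Zb/r$ of the edges satisfying a local linear relation at every interior vertex. This relation expresses the total holonomy around the vertex as a fixed element of $\Zb/r$ determined by the local combinatorics of the decomposition (an ``angle defect'' contribution of $1$ for a generic vertex). The boundary data $\kappa^\mathrm{in},\kappa^\mathrm{out}$ prescribes the labels on boundary edges.

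For \textbf{necessity}, I would sum the local vertex relations over all interior vertices. Each interior edge is incident to two vertices with opposite orientations and so cancels from the sum, while each boundary edge contributes its prescribed $\kappa$-label once. Simultaneously, the sum of the right-hand sides of all vertex relations reproduces the combinatorial Gauss--Bonnet expression $V-E+F=\chi(\Sigma)$ once the edge and face contributions are organised; the ingoing boundary circles appear with the opposite orientation to the outgoing ones, which is what introduces the sign between $\sum_j\kappa^\mathrm{out}(j)$ and $\sum_l\kappa^\mathrm{in}(l)$. The resulting congruence is exactly \eqref{eq:prop:existence-of-r-spin-str}.

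For \textbf{sufficiency}, assuming \eqref{eq:prop:existence-of-r-spin-str} holds, I construct a labelling explicitly. Since $\Sigma$ is connected, fix a spanning tree $T$ of the $1$-skeleton and label its edges arbitrarily; the boundary edges are already fixed by $\kappa^\mathrm{in/out}$. Each remaining edge closes a unique independent cycle in $\Sigma\setminus T$ (together with a face-neighbour path), and the local vertex relations determine the non-tree edge labels one at a time by propagation around those cycles. The only potential obstruction is the global closing-up condition around $\Sigma$, and the computation of the necessity step shows that this condition reduces to \eqref{eq:prop:existence-of-r-spin-str}, which holds by hypothesis. Hence the labelling, and thus the $r$-spin structure, exists.

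The \textbf{main obstacle} is careful sign bookkeeping at the boundary. In \cite{Runkel:2018rs} the boundary data is encoded by $\lambda=1-\kappa^\mathrm{in}$ and $\mu=1-\kappa^\mathrm{out}$, so one must consistently translate between these, and one must also account for the fact that ingoing boundary circles enter $\Sigma$ with reversed orientation. These conventions conspire precisely to produce the specific combination $+\sum\kappa^\mathrm{out}(j)-\sum\kappa^\mathrm{in}(l)$ on the right-hand side of \eqref{eq:prop:existence-of-r-spin-str}, and verifying this is the delicate part of the argument.
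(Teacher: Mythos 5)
This proposition is imported verbatim from \cite[Prop.\,2.19]{Runkel:2018rs}; the paper under review gives no proof of it, so there is nothing internal to compare against. Your strategy --- encode an $r$-spin structure as an edge labelling in $\Zb/r$ subject to one linear condition per interior vertex, then observe that the cokernel of the resulting linear system on a connected surface is a single $\Zb/r$-valued compatibility condition that evaluates to the Euler-characteristic congruence --- is essentially the strategy of the cited reference, phrased for a general cell decomposition rather than the explicit polygon decomposition used there. The necessity half is sound in outline, with one caveat: in the model of \cite{Runkel:2018rs} the contribution $\hat{s}_e$ of an edge to a vertex condition depends on the edge orientation relative to that vertex, so summing over interior vertices does not make interior edges \emph{cancel}; each interior edge contributes a fixed constant, and it is exactly this constant (together with the per-vertex and per-face constants) that assembles into $V-E+F$. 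Since that bookkeeping is the entire content of the computation, ``cancels'' papers over the step that actually produces $\chi(\Sigma)$.

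The genuine gap is in the sufficiency step: your spanning-tree propagation runs in the wrong direction. You propose to label the tree edges arbitrarily and then determine the non-tree edge labels from the vertex conditions, but a non-tree edge is incident to two vertices and hence appears in two conditions, so ``determining it by propagation around a cycle'' is not a well-defined procedure and there is no guarantee that each vertex condition contains exactly one as-yet-undetermined unknown. The correct version is the opposite assignment: fix the labels of all edges \emph{not} in the spanning tree (and the boundary edges, as dictated by $\kappa^{\mathrm{in/out}}$) arbitrarily, then solve the vertex conditions from the leaves of the tree inward --- each leaf condition determines the label of the unique tree edge joining it to its parent, and each step consumes one fresh vertex condition and one fresh tree edge. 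The single leftover condition at the root is the sum of all vertex conditions, i.e.\ precisely the congruence \eqref{eq:prop:existence-of-r-spin-str}. With that inversion the argument closes; as written, it does not.
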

Let us write $\Sigma_{g,b}^\mathrm{in}$ (resp.\ $\Sigma_{g,b}^\mathrm{out}$) 
for a connected bordism of genus $g$ with 
$b$ ingoing (resp.\ outgoing) boundary components only and $\Sigma_g$ for $b=0$.
For $r$ even, tensoring the TFT of \cite[Thm.\,1.3]{Runkel:2018rs} with $Z_\chi^{2^{-1/2}}$ we get:
\begin{theorem}
	\label{thm:Arf}
	If $r$ is even then there is an invertible $r$-spin TFT $Z_\mathrm{Arf}$ which
	computes the Arf invariant. For $\Sigma_{g}$ with $g$ satisfying 
	\eqref{eq:prop:existence-of-r-spin-str} and an $r$-spin structure $\varphi$ on $\Sigma_{g}$ we have
	\begin{align}
		Z_{\Arf}(\Sigma_{g},\varphi)=
		(-1)^{\Arf(\varphi)}\ .
		\label{eq:thm:arf}
	\end{align}
\end{theorem}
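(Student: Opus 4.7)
The plan is to construct $\funZ_{\Cl}$ as a state-sum TFT whose input datum is the Clifford algebra $\Cl_1=\Cb\langle\theta\rangle/(\theta^2=1)$ with $|\theta|=1$, viewed as a symmetric Frobenius algebra in $\SVect$, and then to verify the Arf formula on closed surfaces. Because $r$ is even, the group homomorphism $\spin{r}\to\spin{2}$ covering the identity on $SO_2$ induces a symmetric monoidal reduction functor $\rho:\Bord{2}{\spin{r}}\to\Bord{2}{\spin{2}}$, so it suffices to build an invertible $2$-spin TFT with the stated value on closed surfaces and then set $\funZ_{\Cl}:=\funZ_{\Cl}^{\mathrm{spin}}\circ\rho$.

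First, I would invoke the combinatorial model of spin surfaces from \cite{Novak:2015phd,Runkel:2018rs}: a spin structure on a bordism is encoded by a $\Zb/2$-valued decoration of the edges of a PLCW decomposition satisfying a local monodromy condition around each face. The state-sum recipe then produces a scalar from such a decorated decomposition via traces of products of Frobenius structure maps of $\Cl_1$. Invariance under the combinatorial moves (Pachner moves adapted to the spin setting, together with face-decoration changes) reduces to a finite list of algebraic identities, all of which follow from the fact that $\Cl_1$ is a strongly separable symmetric Frobenius superalgebra.

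Second, to see that this functor is invertible I would note that $\Cl_1$ is simple as a superalgebra, so its state-sum value on the spin circle is a one-dimensional super vector space; hence the TFT factors through $\SLine^{\times}$. On a closed surface $\Sigma_g$ with spin structure $\varphi$, evaluating the state-sum on any compatible PLCW decomposition produces a factor $2^{\chi(\Sigma_g)/2}$ from $\dim(\Cl_1)=2$ combined with the Euler-characteristic counting of cells, together with an overall sign that depends only on $\varphi$.

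The main obstacle is identifying this sign with $(-1)^{\Arf(\varphi)}$. To do so I would choose a symplectic basis $\{\alpha_i,\beta_i\}_{i=1}^g$ of $H_1(\Sigma_g;\Zb/2)$ represented by embedded curves, and use the quadratic refinement $q_\varphi$ of the intersection form determined by $\varphi$, in terms of which $\Arf(\varphi)=\sum_{i=1}^g q_\varphi(\alpha_i)\,q_\varphi(\beta_i)$. The odd generator $\theta$ of $\Cl_1$ contributes a sign each time a cycle traverses an edge with nontrivial spin decoration, and matching these contributions along the symplectic basis with the values of $q_\varphi$ on $\alpha_i$ and $\beta_i$ yields the desired identification. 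The general $r$-spin formula then follows at once by pullback along $\rho$, since the Arf invariant of an $r$-spin structure is by definition that of its underlying $2$-spin reduction.
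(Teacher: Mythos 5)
This theorem is not proved in the paper at all: it is quoted from \cite[Thm.\,1.1]{Runkel:2018rs}, so there is no internal argument to compare yours against. Your outline is essentially a reconstruction of the proof in that reference: $\funZ_{\Cl}$ is indeed constructed there as a state-sum (lattice) TFT on the combinatorial model of $r$-spin surfaces with input the Clifford algebra with one odd generator, invertibility comes from the circle spaces being one-dimensional, and the closed-surface value is evaluated on an explicit marked cell decomposition. The one genuine difference of route is your reduction step: in \cite{Runkel:2018rs} the state sum is run directly on the $\Zb/r$-decorated decompositions (the relevant hypothesis being that the Nakayama automorphism of $\Cl$ has order two, hence order dividing $r$ exactly when $r$ is even), rather than by first pushing forward along $\spin{r}\to\spin{2}$. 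Your functor $\rho$ is legitimate and is consistent with the fact that the Arf invariant of an $r$-spin structure is by definition that of the induced spin structure, but you should check that $\rho$ is actually well defined on the skeletal/combinatorial model (it amounts to reducing edge labels modulo $2$ and verifying compatibility with the moves of \cite[Fig.\,4\,and\,5]{Runkel:2018rs}).

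Two places where the sketch falls short of a proof. First, ``$\Cl_1$ is simple as a superalgebra, so its value on the spin circle is one-dimensional'' is not by itself an argument: the circle space of a state-sum TFT is the image of a cylinder idempotent (a twisted centre of $\Cl$ depending on the boundary type $\kappa$), and one must compute these images explicitly to see they are super lines. Second, and more importantly, the identification of the residual sign with $(-1)^{\Arf(\varphi)}$ is the substantive content of the theorem, and your last paragraph only asserts that the edge-decoration signs ``match'' the values of the quadratic refinement $q_\varphi$ on a symplectic basis. Carrying this out requires fixing a concrete decomposition of $\Sigma_g$ (a one-vertex, one-face decomposition with $2g$ edges is the standard choice), expressing $q_\varphi(\alpha_i)$ and $q_\varphi(\beta_i)$ in terms of the edge markings, and evaluating the resulting trace over $\Cl^{\otimes 2g}$; as written, that step states what must be shown rather than showing it. The strategy is the right one, but the crux is left unexecuted.
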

\begin{example}
	\label{ex:torus-sigma-gb}
	\leavevmode
	\begin{enumerate}
		\item There exist $r$-spin structures on the sphere if and only if $r=1$ or $r=2$.
		\item There exist $r$-spin structures on the torus for every value of $r$. 
			The isomorphism classes of $r$-spin structures on a fixed torus
			are in bijection with $(\Zb/r)^2$ and we write
			$T(s,t)$ for an $r$-spin torus corresponding to $(s,t)\in(\Zb/r)^2$. 
			The mapping class group of the torus $SL(2,\Zb)$ acts on
			$(\Zb/r)^2$ via the standard action
			and the orbits, i.e.\ diffeomorphism classes
			of $r$-spin tori are in bijection with the divisors of $r$
			\cite{Geiges:2012rs,Runkel:2018rs}.
		\item Let $\tilde{r}:=r/\mathrm{gcd}(r,2)$.  
			There exist $r$-spin structures on $\Sigma_{g}$ 
			if and only if 
			\begin{align}
				g\equiv 1\Mod{\tilde{r}}\ .
				\label{eq:ex:torus-sigma-gb}
			\end{align}
			We will write $U_l=\Sigma_{1+l\tilde{r}}$.
			If $1+l\tilde{r}\ge2$, there is one mapping class group
			orbit of $r$-spin surfaces with underlying surface 
			$U_l$ for $r$ odd and two for $r$ even \cite{Geiges:2012rs,Randal:2014rs,Runkel:2018rs}.
			The latter two are distinguished by the Arf invariant
			and we denote these $r$-spin surfaces by
			$U_l^{(+)}$ (Arf invariant $+1$)
			and $U_l^{(-)}$ (Arf invariant $-1$).

		\item The disc $\Sigma_{0,1}$ has a unique $r$-spin structure up to isomorphism.
			In case the boundary is outgoing then it is of type $+1$,
			if it is ingoing then it is of type $-1$.
	\label{ex:torus-sigma-gb:4}
	\end{enumerate}
\end{example}

After this recollection of notions we turn to the computation of the group $\SKK{2}{\spin{r}}$. For this we will look at some particular SKK-relations.
The following lemma will be proved in Appendix~\ref{app:proof} using the
combinatorial model of $r$-spin surfaces of \cite{Runkel:2018rs}.
\begin{lemma}
	\label{lem:torus-rel}
	In $\SKK{2}{\spin{r}}$ we have the relations
	\begin{align}
		[T(\kappa,u_1)]+ [T(\kappa,u_2)]+ [T(\kappa,u_3)]+ [T(\kappa,u_4)]&=
		[T(\kappa,u_1+u_2)]+[T(\kappa,u_3+u_4)]\ ,
		\label{eq:lem:torus-rel:1}\\
		[T(1,u)]&=0\ ,
		\label{eq:lem:torus-rel:2}
	\end{align}
	for every $\kappa,u,u_i\in\Zb/r$ ($i=1,\dots,4$).
	The $k$-invariant of $S^1_{\kappa}$ is $T(\kappa,0)$.
\end{lemma}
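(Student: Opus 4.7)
The overall strategy is to realise each $r$-spin torus as a composition of cylinders in $\Bord{2}{\spin{r}}$, so that the SKK relation \eqref{eq:def-SKK-rel} can be applied directly. Throughout I would use the combinatorial model of \cite{Novak:2015phd, Runkel:2018rs}, which encodes an $r$-spin surface by a polygon with $\Zb/r$-decorations on its edges; this makes it straightforward to read off the parameters $(\kappa, u)$ of a closed surface obtained by gluing two pieces.

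For the first identity, take $S = S^1_\kappa \sqcup S^1_\kappa$ and consider connected annuli $N_u \colon \emptyset \to S$ and $M_v \colon S \to \emptyset$ whose combinatorial data encodes ``vertical'' spin twists $u$ and $v$. One checks that $M_v \circ N_u \cong T(\kappa, u+v)$ up to a $\kappa$-dependent constant shift which is irrelevant for what follows. Applying the SKK relation to the quadruple $(M_{v_1}, M_{v_2}, N_{u_1}, N_{u_2})$ yields the two-term identity
\[
[T(\kappa, a)] + [T(\kappa, b)] \;=\; [T(\kappa, c)] + [T(\kappa, d)] \qquad \text{whenever } a+b \equiv c+d \Mod{r}.
\]
Specialising to $(a,b,c,d) = (u_1, u_2, u_1{+}u_2, 0)$ and adding the analogous identity for $u_3, u_4$ gives the desired first relation up to a correction term $2[T(\kappa,0)]$. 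To kill this correction I would apply SKK a second time to an auxiliary configuration (e.g.\ with $S = (S^1_\kappa)^{\sqcup 2}$ and disjoint unions of cylinders attached in ``matched'' versus ``swapped'' patterns) whose two compositions differ precisely by two copies of $T(\kappa, 0)$, yielding $2[T(\kappa, 0)]=0$.

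For $[T(1,u)]=0$, I would first reduce to the case $u = 0$: the vectors $(1, u)$ and $(1, 0)$ lie in the same $SL(2, \Zb)$-orbit on $(\Zb/r)^2$, so by Example~\ref{ex:torus-sigma-gb}(2) the tori $T(1, u)$ and $T(1, 0)$ are diffeomorphic. Since $S^1_1$ bounds the unique outgoing $r$-spin disc (Example~\ref{ex:torus-sigma-gb}(4)), any cut of $T(1, 0)$ along a meridian gives a cylinder whose boundary circles can be capped with discs on each side; an SKK exchange between the ``glue-back'' and ``cap-off'' compositions then identifies $[T(1,0)]$ with the class of a $2$-sphere, which a further easy SKK manipulation (using two discs) shows is trivial. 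The final $k$-invariant statement is then direct: unwinding the definition
\[
k_{||\Bord{2}{\spin{r}}||}(S^1_\kappa) \;=\; \mathrm{ev}_{S^1_\kappa} \circ \sigma_{S^1_{-\kappa}, S^1_\kappa} \circ \mathrm{coev}_{S^1_\kappa}
\]
produces a closed torus whose combinatorial data is readily computed to be $(\kappa, 0)$.

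The hardest step is expected to be the proof of $2[T(\kappa, 0)] = 0$: this does not follow from any single SKK relation, and constructing the auxiliary bordisms whose matched/swapped compositions differ by exactly two copies of $T(\kappa, 0)$ requires careful bookkeeping in the combinatorial model to track the $\Zb/r$-labels through non-trivial gluings. The other identities reduce, once the right decomposition is chosen, to essentially formal consequences of the SKK relation and standard facts about $r$-spin discs and the $SL(2, \Zb)$-action on spin structures on the torus.
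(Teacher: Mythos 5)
Your argument for \eqref{eq:lem:torus-rel:2} does not work for $r>2$. By Example~\ref{ex:torus-sigma-gb}\,(4) an $r$-spin disc has boundary type $+1$ (outgoing) or $-1$ (ingoing) \emph{only}, whereas the cylinder obtained by cutting $T(1,u)$ along a meridian has boundary circles of types $1$ and $-1$: to cap $C^\mathrm{out}(1,u)\colon\emptyset\to S^1_{1}\sqcup S^1_{-1}$ with ingoing discs you would need an ingoing disc with boundary $S^1_{1}$, which exists only when $2\equiv 0\Mod{r}$. Consistently, $S^2$ admits no $r$-spin structure for $r>2$ (Example~\ref{ex:torus-sigma-gb}\,(1)), so ``the class of a $2$-sphere'' is not available. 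Even for $r\le 2$ your intermediate claims are false: the Euler characteristic is an SKK invariant (Lemma~\ref{lem:eulerchar-SKK}), so $[T(1,0)]\neq[S^2]$, and $[S^2]$ is not trivial --- it generates a $\Zb$ in $\SKK{2}{\spin{r}}$, cf.\ \eqref{eq:chi-section}. (The correct output of your cap-off move, where it exists, is $[T]+2[S^2]=2[S^2]$, not $[T]=[S^2]$.) The paper's proof circumvents the type obstruction by capping the type $-1$ circle with a disc and the type $+1$ circle with a genus-one piece $\Sigma_{1,1}$ (and dually on the outgoing side); the extra closed genus-one surfaces appearing on the two sides of the SKK relation then cancel because a cylinder glued onto the unique $r$-spin disc is again that disc.

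For \eqref{eq:lem:torus-rel:1} your two-term identity is plausible (after correcting $S$ to $S^1_\kappa\sqcup S^1_{-\kappa}$ --- an annulus with both boundaries in the same direction forces opposite types by Proposition~\ref{prop:existence-of-r-spin-str}), but it only reduces the claim to $2[T(\kappa,0)]=0$, and that step is not supplied. The auxiliary configuration you sketch cannot supply it: with only two cut circles every composite $M_i\circ N_j$ of annuli is a single torus, so the matched and swapped sides always have the same number of connected components and can never differ by two copies of $T(\kappa,0)$. One needs four cut circles, with the two families of cylinders pairing them differently so that one side of the SKK relation is four tori and the other side is two; but that is exactly the single SKK move the paper performs, which yields \eqref{eq:lem:torus-rel:1} directly (and then $2[T(\kappa,0)]=0$ follows by setting $u_1=u_3=0$, as in Lemma~\ref{lem:subgroup-tori}). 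So the detour through the two-term identity defers, rather than provides, the essential construction; the remaining content is the combinatorial identification of the two connected four-cylinder composites as $T(\mp\kappa,u_1-u_2)$ and $T(\kappa,u_3+u_4)$ together with the $SL(2,\Zb)$ identity $[T(-\kappa,u)]=[T(\kappa,-u)]$, which is what the appendix actually carries out.
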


\begin{lemma}
	\label{lem:subgroup-tori}
	The subgroup 
	$\Tc^{(r)} \subset \SKK{2}{\spin{r}}$ 
	generated by 
	$r$-spin tori is $\Zb/2$ for $r$ even and trivial for $r$ odd.
\end{lemma}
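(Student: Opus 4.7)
The plan is to combine the two relations of Lemma~\ref{lem:torus-rel} with the $SL(2,\Zb)$-orbit description of $r$-spin tori recalled in Example~\ref{ex:torus-sigma-gb}.2. First, specialising \eqref{eq:lem:torus-rel:1} to $u_1=u_2=u_3=u_4=0$ yields $2[T(\kappa,0)]=0$ for every $\kappa\in\Zb/r$. Setting only $u_3=u_4=0$ and rearranging then shows that $a_\kappa:\Zb/r\to\SKK{2}{\spin{r}}$ defined by $a_\kappa(u):=[T(\kappa,u)]-[T(\kappa,0)]$ is a group homomorphism.

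Next I would observe that any $(\kappa,1)\in(\Zb/r)^2$ satisfies $\gcd(\kappa,1,r)=1$, so by the orbit description $T(\kappa,1)$ is diffeomorphic to $T(1,0)$. Together with \eqref{eq:lem:torus-rel:2} this gives $[T(\kappa,1)]=0$, so $a_\kappa(1)=-[T(\kappa,0)]=[T(\kappa,0)]$ using $2[T(\kappa,0)]=0$. Being a group homomorphism on $\Zb/r$, this forces $[T(\kappa,u)]=(1+u)[T(\kappa,0)]$ as well as $r[T(\kappa,0)]=0$, which combined with the order-two relation gives $\gcd(r,2)[T(\kappa,0)]=0$.

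For $r$ odd the latter equality directly yields $[T(\kappa,0)]=0$ for every $\kappa$, so $\Tc^{(r)}$ is trivial. For $r$ even I would additionally use the diffeomorphism $T(\kappa,0)\cong T(0,\kappa)$ (both have the same $\gcd$ with $r$) and apply the previous formula with the coordinates swapped to obtain $[T(\kappa,0)]=(1+\kappa)[T(0,0)]$. Combining these, $[T(\kappa,u)]=(1+\kappa)(1+u)[T(0,0)]$ for all $\kappa,u\in\Zb/r$, and since $2[T(0,0)]=0$ every $r$-spin torus class lies in $\{0,[T(0,0)]\}$; hence $\Tc^{(r)}$ is cyclic of order dividing $2$.

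Finally, I would rule out triviality in the even case by invoking the Arf TFT $\funZ_{\Cl}$ of Theorem~\ref{thm:Arf}, which defines a homomorphism $\SKK{2}{\spin{r}}\to\Cb^{\times}$ sending an $r$-spin torus $(\Sigma_1,\varphi)$ to $(-1)^{\Arf(\varphi)}$. Since $r$-spin tori with Arf invariant $1$ exist whenever $r$ is even, some torus class is mapped to $-1$, forcing $[T(0,0)]\neq0$ and hence $\Tc^{(r)}\cong\Zb/2$. The only real subtlety I anticipate is keeping the coordinate-swap symmetry straight, since that is exactly what reduces the a priori $r$-many generators of $\Tc^{(r)}$ to the single class $[T(0,0)]$.
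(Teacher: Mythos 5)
Your proof is correct and follows essentially the same route as the paper: extract the $2$-torsion relation and the additivity in $u$ from Lemma~\ref{lem:torus-rel}, use the $SL(2,\Zb)$-orbit description together with $[T(1,u)]=0$ to reduce every torus class to $[T(0,0)]$, and invoke the Arf TFT of Theorem~\ref{thm:Arf} to see $[T(0,0)]\neq 0$ for $r$ even. Your repackaging of the additivity as the homomorphism $a_\kappa:\Zb/r\to\SKK{2}{\spin{r}}$ is a mild streamlining (the resulting $r$-torsion disposes of the odd case via $\gcd(r,2)=1$), but the substance is identical.
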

\begin{proof}
	By \eqref{eq:lem:torus-rel:1} in Lemma~\ref{lem:torus-rel} for
	$u_1=u_3=0$ we get
	\begin{align}
		2[T(\kappa,0)]=0
		\label{eq:pf::prop:pi0-pi1-k-bord-r:2torsion}
	\end{align}
	for every $\kappa\in\Zb/r$. Since for arbitrary $s,t\in\Zb/r$
	we have $[T(s,t)]$=$[T(\mathrm{gcd}(s,t),0)]$, every $r$-spin torus has 2-torsion in $\SKK{2}{\spin{r}}$.

	Again by \eqref{eq:lem:torus-rel:1}, now for $u_1=0$, $u_3=1$ and using
	\eqref{eq:lem:torus-rel:2} we get
	\begin{align}
		[T(\kappa,0)]+[T(\kappa,u)]=[T(\kappa,u+1)]\ .
		\label{eq:pf:prop:pi0-pi1-k-bord-r:reduce1}
	\end{align}
	Combining the latter with \eqref{eq:pf::prop:pi0-pi1-k-bord-r:2torsion} we get
	\begin{align}
		[T(\kappa,u)]=[T(\kappa,u+2)]\ .
		\label{eq:pf:prop:pi0-pi1-k-bord-r:reduce2}
	\end{align}
	Therefore every $r$-spin torus is equal to $[T(0,0)]$ or $[T(1,0)]$,
	the latter being zero by \eqref{eq:lem:torus-rel:2} 
	of Lemma~\ref{lem:torus-rel}.

	If $r$ is odd then $[T(0,0)]=[T(1,0)]$ and hence $\Tc^{(r)}=\{0\}$.

	For $r$ even Theorem~\ref{thm:Arf} gives an SKK-invariant
	$\SKK{2}{\spin{r}}\to\Cb^{\times}$ computing the 
	Arf invariant. It has value $-1\in \Cb^{\times}$ on $[T(0,0)]$, 
	which shows that in $\SKK{2}{\spin{r}}$ the element $[T(0,0)]$ 
	is non-trivial and hence $\Tc^{(r)}\cong\Zb/2$.

\end{proof}
The following lemma can be proven using the SKK-relations,
Lemma~\ref{lem:torus-rel}
and Theorem~\ref{thm:Arf}, i.e.\ the fact that the Arf invariant is compatible with glueing.
\begin{lemma}
	\label{lem:higher-genus-rel}
	Recall the $r$-spin surfaces $U_l$ ($r$ odd) and $U_l^{(\pm)}$ ($r$ even) from Example~\ref{ex:torus-sigma-gb}.
	We have
	\begin{align}
		[U_l^{(+)}]+[T(0,0)]&=[U_l^{(-)}]\ ,
		\label{eq:lem:higher-genus-rel:2}
	\end{align}
	\begin{align}
		[U_l]+[U_j]= [U_{l+j}]\quad\text{ and }\quad
		[U_l^{(+)}]+[U_j^{(+)}]= [U_{l+j}^{(+)}]\ ,
		\label{eq:lem:higher-genus-rel:1}
	\end{align}
	for every $l,j\in\Zb_{\ge0}$.
	If $r\le2$ we furthermore have
	\begin{align}
		[\Sigma_{g+1}]+[S^2]=[\Sigma_g]\quad\text{ and }\quad
		[\Sigma_{g+1}^{(+)}]+[S^2]=[\Sigma_g^{(+)}]\ ,
		\label{eqeq:lem:higher-genus-rel:3}
	\end{align}
	for every $g\in\Zb_{\ge0}$.
\end{lemma}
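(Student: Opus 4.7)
The plan is to derive each identity from an explicit instance of the SKK relation \eqref{eq:def-SKK-rel}, combined with Lemma~\ref{lem:torus-rel} to suppress unwanted torus classes and Theorem~\ref{thm:Arf} to track Arf invariants when $r$ is even. The starting point is the observation that $[U_0]=0$ for $r$ odd, while for $r$ even $[U_0^{(+)}]=[T(1,0)]=0$ and $[U_0^{(-)}]=[T(0,0)]$; both are already visible from the reductions carried out in the proof of Lemma~\ref{lem:subgroup-tori}.

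For \eqref{eqeq:lem:higher-genus-rel:3} (with $r\le 2$), I would apply \eqref{eq:def-SKK-rel} to $S=S^1_x$ with outgoing pieces $N_1=\Sigma_g^{(+)}\setminus D$, $N_2=D$ and incoming pieces $M_1=U_0^{(+)}\setminus D$, $M_2=D$. All four compositions exist as closed $r$-spin surfaces because $S^2$ admits an $r$-spin structure for $r\le 2$; they compute $\Sigma_{g+1}^{(+)}$, $U_0^{(+)}$, $\Sigma_g^{(+)}$ and $S^2$ respectively, with the Arf labels consistent by Theorem~\ref{thm:Arf}. The SKK relation together with $[U_0^{(+)}]=0$ would then give \eqref{eqeq:lem:higher-genus-rel:3}; the oriented ($r=1$) case is identical without any Arf bookkeeping.

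The same template handles \eqref{eq:lem:higher-genus-rel:2} and \eqref{eq:lem:higher-genus-rel:1} for arbitrary $r$, now with $S=(S^1_x)^{\sqcup\tilde{r}}$ so that every piece can be closed up inside the $r$-spin category. For \eqref{eq:lem:higher-genus-rel:2}, I would take $N_1=U_l^{(+)}\setminus\tilde{r}D$, $N_2=U_l^{(-)}\setminus\tilde{r}D$, $M_1=\tilde{r}D$, $M_2=U_0^{(+)}\setminus\tilde{r}D$; a genus count identifies the four compositions as $U_l^{(+)}$, $U_l^{(-)}$, $U_{l+1}^{(+)}$, $U_{l+1}^{(-)}$, with the Arf signs pinned down by Theorem~\ref{thm:Arf}. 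The SKK relation then says that $[U_l^{(-)}]-[U_l^{(+)}]$ is independent of $l$, and evaluating at $l=0$ gives the constant value $[T(0,0)]$ by Lemma~\ref{lem:torus-rel}. For \eqref{eq:lem:higher-genus-rel:1}, I would instead take $N_1=U_l\setminus\tilde{r}D$, $N_2=\tilde{r}D$, $M_1=U_j\setminus\tilde{r}D$, $M_2=U_0\setminus\tilde{r}D$; the genus formula $g_1+g_2+k-1$ identifies the compositions as $U_{l+j+1}$, $U_j$, $U_{l+1}$, $U_0$, and the SKK relation together with $[U_0]=0$ specialises at $l=0$ to the recursion $[U_{j+1}]=[U_1]+[U_j]$, whence \eqref{eq:lem:higher-genus-rel:1} follows by induction on $j$.

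The hard part is not this algebraic derivation but the combinatorial book-keeping needed to justify each SKK instance: one must choose $r$-spin structures on the bordism pieces with matching restrictions to the shared boundary $S$ and, for $r$ even, with Arf invariants ensuring that each closed composition is the prescribed surface $U_k^{(\pm)}$. Existence of the $r$-spin structures on each piece is guaranteed by Proposition~\ref{prop:existence-of-r-spin-str}, and the additive behaviour of $(-1)^{\Arf}$ under gluing along circles, supplied by Theorem~\ref{thm:Arf}, is exactly what makes the Arf signs come out as needed.
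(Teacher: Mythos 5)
Your proof is correct and follows exactly the route the paper indicates --- the paper states this lemma without a written proof, remarking only that it follows from the SKK relations, Lemma~\ref{lem:torus-rel} and Theorem~\ref{thm:Arf}, and your decompositions (cutting along $\tilde r$ circles in general, and along one circle plus a disc when $r\le2$), genus counts, and boundary-type checks via Proposition~\ref{prop:existence-of-r-spin-str} all work out. The one point you gloss over --- that the SKK instance for \eqref{eq:lem:higher-genus-rel:2} a priori yields $[U_l^{(-)}]-[U_l^{(+)}]=\pm\bigl([U_{l+1}^{(-)}]-[U_{l+1}^{(+)}]\bigr)$ rather than equality on the nose --- is harmless, since the difference equals the $2$-torsion class $[T(0,0)]$ by Lemma~\ref{lem:subgroup-tori}.
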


\begin{theorem}
	\label{thm:pi0-pi1-k-bord-r}
	The zeroth and first homotopy groups 
	and the $k$-invariant 
	of $||\Bord{2}{\spin{r}}||$ are the following:
	\begin{center}
	\begin{tabular}{c|c|c|c}
		$r$ & $\pi_0$ & $\pi_1$ & $k:\pi_0\to\pi_1$\\
		\hline
		$0$ & $\Zb/{2}$ & $\Zb/{2}$ & $\id$\\
		$>0$, even & $\Zb/{2}$ & $\Zb\times\Zb/{2}$ & $x\mapsto (0,x)$\\
		$>0$, odd & $\{0\}$ & $\Zb$ & $0$\\
	\end{tabular}
	\end{center}
\end{theorem}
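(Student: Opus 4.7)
The plan is to apply Theorem~\ref{thm:SKK} to reduce the computation of $\pi_0$ and $\pi_1$ of $||\Bord{2}{\spin{r}}||$ to the bordism group $\Omega_1^{\spin{r}}$ and the SKK group $\SKK{2}{\spin{r}}$; the $k$-invariant will then follow from the last sentence of Lemma~\ref{lem:torus-rel}. The key inputs are the existence criterion of Proposition~\ref{prop:existence-of-r-spin-str}, the structural Lemmas~\ref{lem:subgroup-tori} and~\ref{lem:higher-genus-rel}, and the two independent SKK-invariants $\chi$ (Lemma~\ref{lem:eulerchar-SKK}) and $\Arf$ (Theorem~\ref{thm:Arf}, for $r$ even).

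For $\pi_0=\Omega_1^{\spin{r}}$ the generators are $[S^1_\kappa]$ with $\kappa\in\Zb/r$ (or $\kappa\in\Zb$ for $r=0$). Proposition~\ref{prop:existence-of-r-spin-str} applied to the disc yields $[S^1_1]=0$; applied to a pair of pants it gives $[S^1_{\kappa_1}]+[S^1_{\kappa_2}]=[S^1_{\kappa_1+\kappa_2-1}]$, so that $[S^1_\kappa]=(1-\kappa)[S^1_0]$; applied to a once-punctured torus it yields $[S^1_{-1}]=0$ and hence $2[S^1_0]=0$. For $r$ odd, combining with the cyclic relation $r[S^1_0]=0$ forces $[S^1_0]=0$, so $\pi_0=\{0\}$. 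For $r>0$ even and for $r=0$ the group is a quotient of $\Zb/2$, which I would show is exactly $\Zb/2$ by exhibiting a nontrivial $\Zb/2$-valued bordism invariant of $S^1_0$, e.g.\ the mod-$2$ reduction of $\kappa$.

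For $\pi_1=\SKK{2}{\spin{r}}$, the connected closed $r$-spin surfaces are exhausted by the tori $T(s,t)$ and the higher-genus surfaces $U_l$, $U_l^{(\pm)}$ of Example~\ref{ex:torus-sigma-gb}. Lemma~\ref{lem:subgroup-tori} identifies the torus subgroup $\Tc^{(r)}$ with $\Zb/2$ for $r$ even and $\{0\}$ for $r$ odd, and Lemma~\ref{lem:higher-genus-rel} reduces every higher-genus class to an integer multiple of $[U_1]$ (resp.\ $[U_1^{(+)}]$) modulo $\Tc^{(r)}$. For $r=0$ no higher-genus closed framed surface exists (Proposition~\ref{prop:existence-of-r-spin-str} forces $\chi=0$), so $\pi_1=\Tc^{(0)}=\Zb/2$. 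For $r>0$ I would combine the two invariants: $\chi([U_1])=-2r\ne 0$ pins down a $\Zb$-factor complementary to the torus subgroup (which has $\chi=0$), and for $r$ even the Arf TFT of Theorem~\ref{thm:Arf} separates $[T(0,0)]$ from $0$. This yields $\pi_1\cong\Zb$ for $r$ odd and $\pi_1\cong\Zb\times\Zb/2$ for $r>0$ even.

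The $k$-invariant is read off from the last sentence of Lemma~\ref{lem:torus-rel}, which gives $k([S^1_\kappa])=[T(\kappa,0)]$. For $r$ odd, $\pi_0=\{0\}$ and so $k=0$. For $r=0$ and for $r>0$ even the generator $[S^1_0]$ of $\pi_0$ is sent to $[T(0,0)]$, which is the nontrivial element of the torsion summand $\Tc^{(r)}\cong\Zb/2$; this produces the identity map $\Zb/2\to\Zb/2$ for $r=0$ and the map $x\mapsto(0,x)$ for $r>0$ even. The main obstacle will be establishing the upper bound on $\pi_1$, i.e.\ that $[U_1]$ (resp.\ $[U_1^{(+)}]$) together with $[T(0,0)]$ really generate $\SKK{2}{\spin{r}}$; this requires combining the defining four-term SKK relation with Lemmas~\ref{lem:torus-rel} and~\ref{lem:higher-genus-rel} to decompose an arbitrary closed connected $r$-spin surface into these building blocks.
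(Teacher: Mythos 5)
Your overall strategy coincides with the paper's: reduce to $\Omega_1^{\spin{r}}$ and $\SKK{2}{\spin{r}}$ via Theorem~\ref{thm:SKK}, split $\SKK{2}{\spin{r}}$ over $\Zb$ using the Euler characteristic with kernel the torus subgroup $\Tc^{(r)}$ (Lemmas~\ref{lem:subgroup-tori} and~\ref{lem:higher-genus-rel}, plus the Arf invariant for $r$ even), and read off the $k$-invariant from Lemma~\ref{lem:torus-rel}. Your treatment of $\pi_0$ via explicit disc/pair-of-pants/once-punctured-torus relations is a slightly more hands-on version of the paper's argument (which just picks a genus $g$ connecting an arbitrary union of circles to $S^1_0$ or $S^1_1$), and the relation $[S^1_\kappa]=(1-\kappa)[S^1_0]$ with $2[S^1_0]=0$ and $r[S^1_0]=0$ is correct.

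The one genuine flaw is your proposed detecting invariant for $[S^1_0]$. The ``mod-$2$ reduction of $\kappa$'' does not work: by Proposition~\ref{prop:existence-of-r-spin-str}, a null-bordism of $\bigsqcup_i S^1_{\kappa_i}$ with $n$ ingoing boundary components forces $\sum_i \kappa_i \equiv n \Mod{2}$, so $\sum_i \kappa_i \bmod 2$ is not a bordism invariant (it is nonzero already on the boundary of a disc), and even formally it would assign $0$ to $S^1_0$, detecting nothing. The invariant you want is $S^1_\kappa\mapsto 1-\kappa \bmod 2$ extended additively (equivalently, the number of components minus the total $\kappa$), which is well defined by the same congruence and takes the value $1$ on $S^1_0$; alternatively, the paper sidesteps this entirely by observing that $k([S^1_0])=[T(0,0)]\neq 0$ in $\pi_1$, so $[S^1_0]\neq 0$. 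Two further small points: $\chi([U_1])=-2\tilde r$, which equals $-2r$ only for $r$ odd (it is $-r$ for $r$ even) --- this does not affect the argument since any surjection onto $\Zb$ splits; and for $r\le 2$ your list of closed connected $r$-spin surfaces must include $S^2$, which is what the paper's explicit section $\varphi$ uses in those cases.
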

\begin{proof}
	We start with $\pi_1$, which
	by Theorem~\ref{thm:SKK} is $\SKK{2}{\spin{r}}$.
	Note that the cases $r=1$ and $r=2$ could be treated using Theorem~\ref{thm:SKK-ses}. 
	Here we present a computation which applies for arbitrary values of $r$.

	By Lemma~\ref{lem:eulerchar-SKK} the Euler characteristic
	is a group homomorphism $\chi:\SKK{2}{\spin{r}}\to\Zb$.
	We claim that the kernel of $\chi$ is 
	$\Tc^{(r)}$
	the subgroup generated by $r$-spin tori. 
	This can be seen by observing that
	any element in $\SKK{2}{\spin{r}}$ can be brought to the form
	$[U_l^{(\varepsilon)}]-[U_{l'}^{(\varepsilon)}]$ (or $[U_l]-[U_{l'}]$)
	for $l,l'\in\Zb_{\ge1}$ and $\varepsilon\in\{\pm\}$
	if $r>2$, or to a multiple of $[S^2]$
	if $r\le2$ up to $r$-spin tori using Lemma~\ref{lem:higher-genus-rel}.

	If $r=0$ then $\chi=0$ (cf.\ Example~\ref{ex:torus-sigma-gb})
	and by Lemma~\ref{lem:subgroup-tori} we have $\SKK{2}{\spin{0}}=\Zb/2$.

	If $r>0$ then by Proposition~\ref{prop:existence-of-r-spin-str}
	the values of $\chi$ are divisible by $2\tilde{r}$, 
	where $\tilde{r}=r/\mathrm{gcd}(2,r)$
	and $\chi/(2\tilde{r})$ is surjective so
	we have a short exact sequence
	\begin{align}
		\Tc^{(r)}\xhookrightarrow{\qquad}
		\SKK{2}{\spin{r}}\xtwoheadrightarrow{\chi/(2\tilde{r})}\Zb\ .
		\label{eq:SKK-r-spin-ses}
	\end{align}

	We can define a section of $\chi/(2\tilde{r})$ as follows.
	Let $j=\varepsilon |j|\in\Zb$ and let us define
	\begin{align}
		\begin{aligned}
		\varphi:\Zb&\to\SKK{2}{\spin{r}}\\
		j&\mapsto 
		\begin{cases}
			j[S^2]&\text{; if $r=1,2$,}\\
			\varepsilon [U_{|j|}]&\text{; if $r>2$ is odd,}\\
			\varepsilon [U_{|j|}^{(+)}]&\text{; if $r>2$ is even.}\\
		\end{cases}
		\end{aligned}
		\label{eq:chi-section}
	\end{align}
	By Lemma~\ref{lem:higher-genus-rel} $\varphi$ is a group homomorphism
	and it is clearly a section of $\chi/(2\tilde{r})$, 
	so \eqref{eq:SKK-r-spin-ses} splits which together with 
	Lemma~\ref{lem:subgroup-tori} proves the first part of the theorem.

	\medskip
	We continue with computing $\pi_0=\Omega_1^{\spin{r}}$. 
	Let $\kappa_i\in\Zb/r$ be fixed for $i=1,\dots,n$.
	By \eqref{eq:prop:existence-of-r-spin-str} it is possible to choose $g$
	such that
	\begin{align}
		2-2g-(n+1)+\sum_{i=1}^{n}\kappa_i\equiv 
		\begin{cases}
			1\Mod{r}&\text{; if $r$ is odd,}\\
			0\text{ or }1\Mod{r}&\text{; if $r$ is even,}
		\end{cases}
		\label{eq:mod2kappa}
	\end{align}
	so that there exists an $r$-spin bordism
	\begin{align}
		\bigsqcup_{i=1}^{n}S^1_{\kappa_i}
		\to S^1_{\kappa}
		\label{eq:pi0-bordism}
	\end{align}
	with underlying surface $\Sigma_{g,n+1}$ with $n$ ingoing and 
	one outgoing boundary component and where $\kappa=1$ for $r$ odd 
	or $\kappa\in\left\{ 0,1 \right\}$ for $r$ even. 
	That is, in $\Omega_1^{\spin{r}}$ every element is equal to
	$[S^1_0]$ or to $[S^1_1]$.

	Recall that the disc gives an $r$-spin bordism $S^1_1\to\emptyset$,
	so if $r$ is odd then $\Omega_1^{\spin{r}}=\{0\}$ and
	if $r$ is even then $\Omega_1^{\spin{r}}$ is generated by $[S^1_0]$.
	
	It is easy to see from the previous discussion that $[S^1_0]$
	has 2-torsion. For the rest of the proof let us assume that $r$ is even.
	By Lemma~\ref{lem:torus-rel} the $k$-invariant of $[S^1_0]$ is 
	$[T(0,0)]\in\pi_1$, which is non-zero showing that $[S^1_0]$ is non-zero.
	Altogether we get that $\Omega_1^{\spin{r}}=\Zb/2$ for $r$ even.

\end{proof}

We are ready to prove our main result:

\begin{theorem}
	\label{thm:inv-r-spin-tft-classification}
	The group of isomorphism classes of invertible $r$-spin TFTs with values in
	$\Cc$ is given by
	\begin{center}
		\begin{tabular}{c|c}
			$r$&$\pi_0(\Fun(\Bord{2}{\spin{r}},\Cc))$\\
			\hline
			$0$ & ${}_2(\pi_0(\Cc^{\times}))\hphantom{\times\pi_1(\Cc^{\times})}\times \pi_1(\Cc^{\times})/2$\\
			$>0$, even & ${}_2(\pi_0(\Cc^{\times}))\times\pi_1(\Cc^{\times})\times \pi_1(\Cc^{\times})/2$\\
		      $>0$, odd & $\hphantom{{}_2(\pi_0(\Cc^{\times}))\times} \pi_1(\Cc^{\times})\hphantom{\times\pi_1(\Cc^{\times})/2}$\\
		\end{tabular}
	\end{center}
\end{theorem}
\begin{proof}
	We use Theorem~\ref{thm:picard-grpd-classification} to compute
	$\pi_0(\Fun(\Bord{2}{\spin{r}},\Cc))$.
	If $r=0$ then $f_0\in\Hom(\Zb/2,\pi_0(\Cc^{\times}))={}_2(\pi_0(\Cc^{\times}))$ determines $f_1$. 
	Furthermore $\Ext(\Zb/2,\pi_1(\Cc^{\times}))=\pi_1(\Cc^{\times})/2$.
	If $r>0$ is even then by \eqref{eq:functor-of-Pic-grpds}
	we can write $f_1:\Zb\times \Zb/2\to\pi_{1}(\Cc^{\times})$ 
	as $f_1(x,y)=k_{\Cc^{\times}}\circ f_0(y)+f_1(x,0)$.
	So $f_1$ is completely determined by 
	$f_0\in\Hom(\Zb/2,\pi_0(\Cc^{\times}))$ and by $f_1(1,0)\in\pi_1(\Cc^{\times})$. 
	If $r>0$ is odd then $f_0=0$ and $f_1\in \Hom(\Zb,\pi_1(\Cc^{\times}))\cong \pi_1(\Cc^{\times})$.
	In this case $\Ext(\left\{ 0 \right\},\pi_1(\Cc^{\times}))=\left\{ 0 \right\}$.
\end{proof}

\appendix

\section{SKK groups for stable tangential structure}
\label{app:stable-computations}
The results in this section, in particular the proof of Theorems~\ref{thm:SKK}~and~\ref{thm:SKK-ses}, were communicated to the author by Matthias Kreck, Stephan Stolz and Peter Teichner. We present their proof with their permission.

Throughout this section we will use the symbol $+$ for the disjoint union $\sqcup$ of manifolds for better readability.
\subsection{Equivalent definitions of the SKK groups}
\label{app:defs-of-SKK}

Recall the equivalence relation in Definition~\ref{def:SKK-group} defining the SKK groups.
Originally the SKK groups for unoriented and oriented manifolds were defined via a slightly different equivalence relation \cite{Karras:1973SKKbook}.
Let $S,S'$ be $(d-1)$-dimensional closed unoriented (oriented) manifolds, $M_i:S'\to \emptyset$ and $N_i:\emptyset\to S$ for $i=1,2$ $d$-dimensional unoriented (oriented) bordisms and $\phi,\psi: S\to S'$ (orientation preserving) diffeomorphisms.
We define the composition
\begin{align}
  M_1\cup_{\phi} N_1:&=
  M_1\circ C_{\phi}\circ N_1\,, 
  \label{eq:def-diffeo-comp}
\end{align}
where $C_{\phi}$ is the mapping cylinder of $\phi$
\begin{align}
  C_{\phi}&=(S\times[0,1]\sqcup S)/( (s,1)\sim {\phi}(s))\quad\text{for $s\in S$,}
\end{align}
and the equivalence relation $\sim'$ on $d$-dimensional closed unoriented (oriented) manifolds as
\begin{align}
  M_1\cup_{\phi} N_1 + M_2\cup_{\psi}  N_2 \sim'
  M_1\cup_{\psi}  N_1 + M_2\cup_{\phi} N_2\ .
  \label{eq:def-SKK-rel-2}
\end{align}
Denote the quotients of the group completion of $\Bord{d}{(\SO)}(\emptyset,\emptyset)$ 
by equivalence relations $\sim$ and $\sim'$ by $\SKK{d}{(\SO)}$ and $\SKKp{d}{(\SO)}$ respectively.

\begin{proposition}
  The groups $\SKK{d}{(\SO)}$ and $\SKKp{d}{(\SO)}$ are isomorphic.
  \label{prop:def-SKK-2-rel}
\end{proposition}
\begin{proof}
  We show that two manifolds are equivalent under $\sim$ if and only if they are equivalent under $\sim'$.

  Consider $M_1 \cup_{\phi} N_1+M_2\cup_{\psi} N_2$
  and define the bordisms 
  \begin{align}
    \bar{M}_1:=M_1\circ C_{\phi}:S\to\emptyset\,,\quad\bar{M}_2:=M_2\circ C_{\psi}:S\to\emptyset\,.
    \label{eq:bar-M}
  \end{align}
  Then
  \begin{align}
    M_1 \cup_{\phi} N_1+M_2\cup_{\psi} N_2=
    \bar{M}_1 \circ N_1+\bar{M}_2\circ N_2\ .
    \label{eq:sim-1}
  \end{align}
  We apply the relation $\sim$ and obtain
  \begin{align}
    \begin{aligned}
    \bar{M}_1 \circ N_1+\bar{M}_2\circ N_2&\sim
    \bar{M}_1 \circ N_2+\bar{M}_2\circ N_1=\\
    M_1 \cup_{\phi} N_2+M_2\cup_{\psi} N_1&=
    N_2^* \cup_{\phi^{-1}} M_1^*+N_1^*\cup_{\psi^{-1}} M_2^*\ ,
    \end{aligned}
    \label{eq:sim-2}
  \end{align}
  where $M_i^*$ and $N_i^*$ denote the transposed bordisms.
  We proceed as above and compose with mapping cylinders to get
  \begin{align}
    N_2^* \cup_{\phi^{-1}} M_1^*+N_1^*\cup_{\psi^{-1}} M_2^*=
    \bar{N}_2^* \circ M_1^*+\bar{N}_1^*\circ M_2^*
    \label{eq:sim-3}
  \end{align}
  and apply the relation $\sim$ again:
  \begin{align}
    \begin{aligned}
    \bar{N}_2^* \circ M_1^*+\bar{N}_1^*\circ M_2^*&\sim
    \bar{N}_2^* \circ M_2^*+\bar{N}_1^*\circ M_1^*=\\
    N_2^* \cup_{\phi^{-1}} M_2^*+N_1^*\cup_{\psi^{-1}} M_1^*&=
    M_2 \cup_{\phi} N_2+M_1\cup_{\psi} N_1\ .
    \end{aligned}
    \label{eq:sim-4}
  \end{align}
  We see that relation $\sim$ implies relation $\sim'$.

  For the converse direction let us assume $S=S'$ and let $\tau:=\sigma_{S,S}$ denote the braiding.
  By applying relation $\sim'$ we have
  \begin{align}
    \begin{aligned}
    M_1 \circ N_1+M_2\circ N_2+(M_1+M_1)\cup_{\tau}(N_1+N_1)&=\\
    (M_1 + M_2)\cup_{\id_S} (N_1+N_2)+(M_1+M_1)\cup_{\tau}(N_1+N_1)&\sim'\\
    (M_1 + M_2)\cup_{\tau} (N_1+N_2)+(M_1+M_1)\cup_{\id_S}(N_1+N_1)&=\\
    M_1 \circ N_2+M_2\circ N_1+(M_1+M_1)\cup_{\id_S}(N_1+N_1)&\ .
    \end{aligned}
    \label{eq:simp-1}
  \end{align}
  Clearly $(M_1+M_1)\cup_{\tau}(N_1+N_1)$ and $(M_1+M_1)\cup_{\id_S}(N_1+N_1)$ are the same morphisms, so we have
  \begin{align}
    M_1 \circ N_1+M_2\circ N_2 \sim' M_1 \circ N_2+M_2\circ N_1\ .
    \label{eq:simp-2}
  \end{align}
  So relation $\sim'$ implies relation $\sim$.

\end{proof}

\subsection{Proof of Theorem~\ref{thm:SKK}}
\label{app:pf-thm:SKK}

\begin{definition}
  A \textsl{groupoid completion of a category $\Cc$} is a groupoid $\norm{\Cc}$ and a functor $\Cc\to\norm{\Cc}$ satisfying the following universal property.
  For every groupoid $\Gc$ and functor $F:\Cc\to\Gc$ there exists a unique functor $\tilde{F}:\norm{\Cc}\to\Gc$ such that
  the following diagram commutes:
  \begin{equation}
    \begin{tikzcd}
      \Cc\ar{rr}{}\ar{dr}[swap]{F}&&\norm{\Cc}\ar{dl}{\tilde{F}}\\
      &\Gc&
    \end{tikzcd}\,.
    \label{eq:grpd-compl-univ}
  \end{equation}
  \label{def:grpd-compl}
\end{definition}

A standard construction of $\norm{\Cc}$ is as follows (see e.g.\ \cite[Sec.\,1]{Gabriel:1967cfht}).
The objects of $\norm{\Cc}$ are the same as the objects of $\Cc$.
A morphism $f:X\to Y$ in $\norm{\Cc}$ is an equivalence class of sequences of objects $X_i$ and morphisms
$f_i$ either $X_{i-1}\to X_i$ or $X_{i-1}\leftarrow X_{i}$ in $\Cc$ for $i=1,\dots,n$ with $X_0=X$ and $X_n=Y$.
The equivalence relation is generated by the following moves:
\begin{enumerate}
  \item Two morphisms $X_{i-1}\xrightarrow{f_i} X_i\xrightarrow{f_{i+1}} X_{i+1}$ 
    ($X_{i-1}\xleftarrow{f_i} X_i\xleftarrow{f_{i+1}} X_{i+1}$)
    can be replaced by the morphism $X_{i-1}\xrightarrow{f_{i+1}\circ f_i} X_{i+1}$
    (resp.\ $X_{i-1}\xleftarrow{f_{i}\circ f_{i+1}} X_{i+1}$).
    \label{move:1}
  \item If $X_{i-1}=X_{i+1}$ and $f_i=f_{i+1}$ with $X_{i-1}\xrightarrow{f_i}X_i\xleftarrow{f_{i+1}}X_{i+1}$, 
    (or $X_{i-1}\xleftarrow{f_i}X_i\xrightarrow{f_{i+1}}X_{i+1}$)
    they can be omitted from the sequence.
    \label{move:2}
\end{enumerate}
Composition is concatenation of sequences.
The functor $\Cc\to\norm{\Cc}$ is identity on objects and sends a morphism $X\xrightarrow{f}Y$ to the sequence
$X=X_0\xrightarrow{f_1=f}X_1=Y$.
Note that by Move~\ref{move:2} the inverse in $\norm{\Cc}$ of (the image of) a morphism $X\xrightarrow{f} Y$ in $\Cc$ is
$X_0=Y\xleftarrow{f} X=X_1$.

There is a group homomorphism
\begin{align}
  \text{group completion of }\Cc(X,X)\to \norm{\Cc}(X,X)
  \label{eq:lem:grp-completion:1}
\end{align}
induced by the functor $\Cc\to\norm{\Cc}$ which may not be surjective.
However under a mild condition it is the case (see also \cite[Prop.\,3.2]{Juer:2013inv}):

\begin{lemma}
  Let $X\in\Cc$ be such that for every $Y\in\Cc$ the set of morphisms $\Cc(X,Y)$ is non-empty if and only if $\Cc(Y,X)$ is non-empty.
  \begin{enumerate}
    \item 
      The group homomorphism \eqref{eq:lem:grp-completion:1} is surjective.
  \label{lem:grp-completion:1}
    \item 
      The quotient of the group completion of $\Cc(X,X)$ by the equivalence relation
      \begin{align}
	(g_1\circ f_2)^{-1}(g_1\circ f_1)\sim
	(g_2\circ f_2)^{-1}(g_2\circ f_1)
	\label{eq:lem:grp-completion:2}
      \end{align}
      for $f_i\in\Cc(X,Y)$, $g_i\in\Cc(Y,X)$
      is isomorphic to $\norm{\Cc}(X,X)$.
  \label{lem:grp-completion:2}
  \end{enumerate}
  \label{lem:grp-completion}
\end{lemma}
\begin{proof}
  \textsl{Part~\ref{lem:grp-completion:1}:}
  Let $f_i$ ($i=1,\dots,n$) be a sequence of morphisms representing $f\in\norm{\Cc}$.
  Let us assume that $f_1$ is a morphisms $X=X_0\to X_1$ (which we can by inserting the identity morphism if necessary).
  Using Moves~\ref{move:1}~and~\ref{move:2} we can assume that the $f_i$'s alternate directions:
  \begin{equation}
    \begin{tikzcd}
      f=\Big(\dots\ar{r}{f_6}&X_5&\ar{l}[swap]{f_5}X_4
      \ar{r}{f_4}&X_3&\ar{l}[swap]{f_3}X_2
    \ar{r}{f_2}&X_1&\ar{l}[swap]{f_1}X\Big)\ .
    \end{tikzcd}
    \label{eq:alternating-arrows}
  \end{equation}
  By the condition on $\Cc$ we obtain morphisms $h_i$ inductively:
  \begin{equation}
    \begin{tikzcd}
      &&X\ar{d}[swap]{h_4}
      &&X\ar{d}[swap]{h_2}&&\\
      \dots\ar{r}{f_6}&X_5\ar{d}[swap]{h_5}&\ar{l}[swap]{f_5}X_4
      \ar{r}{f_4}&X_3\ar{d}[swap]{h_3}&\ar{l}[swap]{f_3}X_2
      \ar{r}{f_2}&X_1\ar{d}[swap]{h_1}&\ar{l}[swap]{f_1}X\ ,\\
      &X&&X&&X&
    \end{tikzcd}
    \label{eq:alternating-arrows-h}
  \end{equation}
  e.g.\ by the existence of $X_1\xleftarrow{f_1}X$ there exists $X_1\xrightarrow{h_1}X$,
  then for $X_2\xrightarrow{f_2}X_1\xrightarrow{h_1}X$ there exists $X_2\xleftarrow{h_2}X$, etc.
  Using Moves~\ref{move:1}~and~\ref{move:2} one checks that 
  \begin{align}
    f=\dots(h_4\circ f_5\circ h_5) (h_3\circ f_4\circ h_4)^{-1}
    (h_2\circ f_3\circ h_3) (h_1\circ f_2\circ h_2)^{-1}
    (h_1\circ f_1)
    \label{eq:f-in-gr-CXX}
  \end{align}
  which is a composition of morphisms in the group completion of $\Cc(X,X)$.

  \textsl{Part~\ref{lem:grp-completion:2}:}
  Consider the diagram for the morphisms in \eqref{eq:lem:grp-completion:2}:
  \begin{equation}
    \begin{tikzcd}
      X\ar{r}{f_1}&Y\ar[bend right]{d}[swap]{g_1}\ar[bend left]{d}{g_2}&X\ar{l}[swap]{f_2}\\
      &X&
    \end{tikzcd}
    \label{eq:eq-rel-diag}
  \end{equation}
  Observe that by Move~\ref{move:2} both sides of \eqref{eq:lem:grp-completion:2} map to the same morphism in $\norm{\Cc}$ under \eqref{eq:lem:grp-completion:1}, so we have a well defined surjective map from the quotient.

  In order to show that this map is an isomorphism we define a map in the opposite direction.
  Let $f\in\norm{\Cc}(X,X)$ with a sequence of morphisms $f_i$ as in \eqref{eq:alternating-arrows}
  and choose morphisms $h_i$ as in \eqref{eq:alternating-arrows-h}.
  As above, we get a morphism $\bar{f}$ in the group completion.
  For another choice of morphisms $h_i'$ in  \eqref{eq:alternating-arrows-h} we get a morphism $\bar{f}'$.
  By looking at the diagram in \eqref{eq:eq-rel-diag} we see that $\bar{f}=\bar{f}'$ in the quotient,
  so the map into the quotient is well defined.

\end{proof}

We note that the conditions of the lemma are satisfied for any symmetric monoidal category with duals.
Now we are ready to prove Theorem~\ref{thm:SKK}.

\begin{proof}[Proof of Theorem~\ref{thm:SKK}]
  We apply Lemma~\ref{lem:grp-completion} Part~\ref{lem:grp-completion:2} for $\Cc=\Bord{d}{G}$ (which is a symmetric monoidal category with duals) and $X=\emptyset$.
  Observe that the relation in \eqref{eq:lem:grp-completion:2} is exactly the defining relation \eqref{eq:def-SKK-rel} of the SKK group.
\end{proof}

\subsection{Proof of Proposition~\ref{prop:SKK-bord-surj}}
\label{app:SKK-bord}
\begin{proof}[Proof of \ref{prop:SKK-bord-surj}]
  We start with a general construction.
  Let $Z\in\Bord{d}{G}$, $Z\xrightarrow{f}Z$ a diffeomorphism of $G$-manifolds and
  $\emptyset\xrightarrow{Y}Z$, $Z\xrightarrow{X}\emptyset$ $G$-bordisms.
  Let $1_X$ be the $G'$-bordism (with corners) with underlying $G'$-manifold $X\times [0,1]$ and define
  $1_Y$ and $1_Z$ similarly.
  We define a $G'$-bordism (with corners) $W_f:1_Z\to C_f$ to the mapping cylinder:
  \begin{align}
    W_f:=\left( Z\times[0,1]\times[0,1] \sqcup Z\right)/( (z,1,1) \sim f(z))
    \label{eq:def-Wf}
  \end{align}
  with the obvious boundary parametrisation.
  We define the $G'$-bordism $W$ as 
  \begin{equation}
    \begin{tikzcd}
      &&{}&&\\
      \emptyset&\ar{l}[swap]{X}Z&&Z\ar[bend right = 90]{ll}[swap]{C_f}\ar[bend left = 90]{ll}{1_Z}&\emptyset\ar{l}[swap]{Y}\\
      &&{}\ar[Rightarrow]{uu}{W_f}&&\\
    \end{tikzcd}
    \label{eq:def-W}
  \end{equation}
  by gluing $1_X$, $W_f$ and $1_Y$ along the last coordinate (i.e.\ as the horizontal composition in an extended bordism 2-category).
  
  We need to show that there is a $G'$-bordism between the two sides of \eqref{eq:def-SKK-rel}.
  Let $X=M_1+M_2$, $Y:=N_1+ N_2$, $Z:=S+S$,
  and $\tau:=\sigma_{S,S}:Z\to Z$ the flip map.
  The required bordism is
  \begin{equation}
  M_1\circ N_1 +M_2\circ N_2 =
  (M_1+M_2)\circ (N_1+N_2)
  \xrightarrow{W}
  (M_1+M_2)\circ \tau\circ (N_1+N_2)=
  M_1\circ N_2 +M_2\circ N_1 
  \ .
    \label{eq:SKK-rel-bord}
  \end{equation}
\end{proof}
\subsection{Computation of SKK groups in the stable case}
\label{app:SKK-computation-stable}

\begin{proposition}
  Let $\xi:G\to \O_d$ be stable, $M,N$ closed $d$-dimensional $G$-manifolds and $W:M\to N$ a $(d+1)$-dimensional $G'$-bordism.
  Then in $\SKK{d}{G}$ we have
  \begin{align}
    [M]-[N]=(\chi(M)-\chi(W))[S^d]
    \label{eq:prop:ker-pi}
  \end{align}
  \label{prop:ker-pi}
\end{proposition}
\begin{proof}
  We proceed in two steps:  First we show that if $W$ is the trace of a single surgery the proposition holds.
  Then we write $W$ as a composition of bordisms, each of which is the trace of a single surgery and apply the above result for each bordism.

  Assume that $W$ is obtained by attaching a $(k+1)$-handle $D^{k+1}\times D^{d-k}$ to $M\times [0,1]$, so $N$ is obtained by $(k+1)$-surgery on $M$.
  We can decompose $M$ and $N$ into $d$-dimensional $G$-bordisms
  \begin{align}
    \begin{aligned}
    M&=
    \left(\emptyset\xleftarrow{M\setminus\mathrm{Inn}(S^k\times D^{d-k})}S^k\times S^{d-k-1}\xleftarrow{S^k\times D^{d-k}}\emptyset\right)\\
    N&=
    \left(\emptyset\xleftarrow{M\setminus\mathrm{Inn}(S^k\times D^{d-k})}S^k\times S^{d-k-1}\xleftarrow{D^{k+1}\times S^{d-k-1}}\emptyset\right)
    \end{aligned}
    \label{eq:dec-M-N-surgery}
  \end{align}
  where the $G$-structures are induced by inclusion of submanifolds with trivial normal bundles of the handle.
  Note that since every (connected component of a) handle is contractible, there is a unique up to isomorphism $G'$ structure on it.
  Furthermore consider the Heegaard decomposition 
  \begin{align}
    S^d= \left(\emptyset\xleftarrow{S^k\times D^{d-k}}S^k\times S^{d-k-1}\xleftarrow{D^{k+1}\times S^{d-k-1}}\emptyset\right)\ .
    \label{eq:heegaard-S-d}
  \end{align}

  In $\SKK{d}{G}$ we have
  \begin{align}
    \begin{aligned}
      [M]+[S^d]&=[M\setminus\mathrm{Inn}(S^k\times D^{d-k})\circ S^k\times D^{d-k}] + [S^k\times D^{d-k}\circ D^{k+1}\times S^{d-k-1}]\\
    &=[M\setminus\mathrm{Inn}(S^k\times D^{d-k})\circ 
    D^{k+1}\times S^{d-k-1}]
    + [S^k\times D^{d-k}\circ 
    S^k\times D^{d-k}]\\
    &= [N]+[S^k \times S^{d-k}]
    \end{aligned}
    \label{eq:M-N-S-d}
  \end{align}
  and for $M=S^d$ we have
  \begin{align}
    [S^d]+[S^d]=[S^{k+1}\times S^{d-k-1}]+[S^k\times S^{d-k}]\ .
    \label{eq:S-d-S-d}
  \end{align}
  For $k=0$ we get $[S^1\times S^{d-1}]=0$,
  and inductively for $k>0$:
  \begin{align}
    [S^k\times S^{d-k}]=(1+(-1)^k)[S^d]=
    \begin{cases}
      2[S^d] &\text{ ($k$ even),}\\
     0 &\text{ ($k$ odd).}
    \end{cases}
    \label{eq:k-cases}
  \end{align}
  From this we obtain
  \begin{align}
    [M]=[N]+(-1)^k [S^d]\ .
    \label{eq:M-N-S-d-simple}
  \end{align}

  Using elementary properties of the Euler characteristic we compute
  \begin{align}
    \begin{aligned}
    \chi(W)&=\chi(M\times [0,1])+\chi(D^{k+1}\times D^{d-k})-\chi(S^k\times D^{d-k})\\
  &=\chi(M)\chi([0,1])+\chi(D^{k+1}) \chi(D^{d-k})-\chi(S^k)\chi( D^{d-k})\\
  &=\chi(M)+1-(1+(-1)^k)=\chi(M)-(-1)^{k}\ .
    \end{aligned}
    \label{eq:chi-W-M}
  \end{align}
  From this we directly get \eqref{eq:prop:ker-pi}:
  \begin{align}
    [M]=[N]+(\chi(M)-\chi(W))[S^d]\ .
    \label{eq:ker-pi-single}
  \end{align}

  \medskip

  Now consider the decomposition
  \begin{align}
    N\xleftarrow{W}M=\left(N=M_n\xleftarrow{W_n}\dots \xleftarrow{W_2}M_1\xleftarrow{W_1}M_0=M\right)\ ,
    \label{eq:dec-W-single}
  \end{align}
  where each $W_i$ is the trace of a single surgery.
  By \eqref{eq:ker-pi-single} we have for every $i=1,\dots,n$ that
  \begin{align}
    [M_{i-1}]=[M_i]+(\chi(M_{i-1})-\chi(W_i))[S^d]\ ,
    \label{eq:ker-pi-single-i}
  \end{align}
  and therefore
  \begin{align}
    [M]=[M_0]=[M_1]+(\chi(M_0)-\chi(W_1))[S^d]=\dots=[M_n]+\sum_{i=1}^{n}(\chi(M_{i-1})-\chi(W_{i}))[S^d]\ .
    \label{eq:ker-pi-multiple-1}
  \end{align}
  On the other hand we have
  \begin{align}
    \chi(W)=\chi(W_n)+\chi\left( \Pi_{i=1}^{n-1}W_i \right)-\chi(M_{n-1})=\dots=\sum_{i=1}^n \chi(W_{i})-\sum_{j=1}^{n-1}\chi(M_{j})\ ,
    \label{eq:chi-W}
  \end{align}
  and combining \eqref{eq:ker-pi-multiple-1} and \eqref{eq:chi-W} we get \eqref{eq:prop:ker-pi}
  \begin{align}
    [M]=[M_n]+(\chi(M_0)-\chi(W))[S^d]=[N]+(\chi(M)-\chi(W))[S^d]\ .
    \label{eq:ker-pi-multiple-2}
  \end{align}

\end{proof}

Now we are ready to present the proof of Theorem~\ref{thm:SKK-ses}.

\begin{proof}[Proof of Theorem~\ref{thm:SKK-ses}]

  \textsl{Part~\ref{part:1}:} This follows directly from Proposition~\ref{prop:ker-pi}.

  \textsl{Part~\ref{part:2}~\ref{part:2a}:}
  Let $\pi:\SKK{d}{G}\twoheadrightarrow\Omega_d^{G'}$ be the map in \eqref{eq:prop:SKK-bord-surj}.
  By Lemma~\ref{lem:eulerchar-SKK} the Euler characteristic is an SKK invariant.
  If $\pi([M])=0$ then $M$ is a boundary of a $(d+1)$-dimensional $G'$-manifold and hence $\chi(M)$ is even.
  We have $\chi(S^{2n})=2$, so altogether we have that $\chi/2:K=\ker(\pi)\to\Zb$ is an isomorphism 
  with inverse $1\mapsto[S^d]$.

  \textsl{Part~\ref{part:2}~\ref{part:2b}:}
  If $\chi(M)$ is even for every closed $d$-dimensional $G$-manifold then $\chi/2$ is a well defined group homomorphism 
  from the whole group $\SKK{d}{G}$ and it is clearly a section of $1\mapsto[S^d]$, so the short exact sequence \eqref{eq:thm:SKK-ses} splits.
  
  \textsl{Part~\ref{part:3}~\ref{part:3a}:}
  Applying Proposition~\ref{prop:ker-pi} for the $G'$-bordism $S^d\xrightarrow{D^{d+1}}\emptyset$ 
  we get that $2[S^d]=0$ in $\SKK{d}{G}$.

  If there exists $\emptyset\xrightarrow{W}\emptyset$ with $\chi(W)$ odd and
  we apply the same proposition to get $\chi(W)[S^d]=0$, and altogether $[S^d]=0$.

  Let us assume now that $\chi(W)$ is even for every closed $(d+1)$-dimensional $G'$-manifold.
  We construct an SKK invariant $\varphi:K\to\Zb/2$ with $\varphi([S^d])=1$ which then proves $K=\Zb/2$.

  Let $[M]\in K$ and choose a $G'$-bordism $M\xrightarrow{U}\emptyset$. We define
  \begin{align}
    \varphi([M]):=\chi(U)\Mod{2}\ .
    \label{eq:def-phi-invariant}
  \end{align}
  By considering $S^d\xrightarrow{D^{d+1}}\emptyset$ we have $\varphi([S^d])=1$.

  We show that $\varphi$ is independent of the choice of the $G'$-bordism $U$:
  Let $M\xrightarrow{V}\emptyset$ be another $G'$-bordism and consider the dual bordism $\emptyset\xrightarrow{V^*}M$.
  We have
  \begin{align}
    \chi(U\circ V^*)=
    \chi(U)+\chi(V^*)-\chi(M)=
    \chi(U)+\chi(V)-\chi(M)\ ,
    \label{eq:phi-indep-of-U}
  \end{align}
  where the left hand side is even by our assumption and $\chi(M)$ is even as $M$ is a boundary,
  so $\chi(U)$ and $\chi(V)$ have the same parity.

  Finally we show that $\varphi$ is an SKK invariant by showing that it takes the same value on both sides of the equivalence relation \eqref{eq:def-SKK-rel}.
  Let $X=M_1\circ N_1+M_2\circ N_2$ and $Y:=M_1\circ N_2+M_2\circ N_1$.
  Pick $Y\xrightarrow{U}\emptyset$, let $X\xrightarrow{W}Y$
  be the bordism in \eqref{eq:SKK-rel-bord} and let $X\xrightarrow{U\circ W}\emptyset$.
  We have $\chi(U\circ W)=\chi(U)+\chi(W)-\chi(Y)$, where the last term is even, as it is a boundary.
  We claim that $\chi(W)$ is even and therefore $\varphi([X])=\varphi([Y])$.
  We compute $\chi(W)$ from the pieces it is constructed from:
  \begin{align}
    \begin{aligned}
    \chi(W)&=
    \chi(X\times[0,1]) 
  -\chi((S+S)\times[ 0,1] ])
    +\chi(W_{\tau})
  -\chi((S+S)\times[ 0,1] ])
    + \chi(Y\times[0,1])\\
    &\equiv
    \chi(X)+\chi( (S+S)\times[0,1]\times[0,1])+\chi(Y)
    \equiv 0 \Mod{2}
    \end{aligned}
    \label{eq:chiW-even}
  \end{align}
  where we used that as a manifold $W_{\tau}\cong (S+S)\times[0,1]\times[0,1]$ and the assumption that $X$ (and $Y$) is a boundary.

  \textsl{Part~\ref{part:3}~\ref{part:3b}:}
This is shown on \cite[Page\,48]{Karras:1973SKKbook}.
  
\end{proof}

\section{Proof of Lemma~\ref{lem:torus-rel}}\label{app:proof}
In this section we will use the combinatorial model of $r$-spin surfaces of
\cite{Novak:2015phd,Runkel:2018rs} to prove Lemma~\ref{lem:torus-rel}. 
We do not wish to present all details of the combinatorial model, we just note that
it consists of a cell decomposition of the surface in question together with a marking,
and refer the reader to \cite[Sec.\,2.3]{Runkel:2018rs}.
A marking consists of an edge orientation and an element in $\Zb/r$ for each edge
and for each face a choice of an edge before gluing the faces along the edges.
There are certain moves between different marked cell decompositions which describe 
isomorphic $r$-spin surfaces, these can be found in \cite[Fig.\,4\,and\,6]{Runkel:2018rs}.

We continue by introducing some notation.
Consider the $r$-spin cylinder $C^\mathrm{in}(\kappa,u)$ 
(resp.\ $C^\mathrm{out}(\kappa,u)$) with two ingoing (resp.\ outgoing) boundary components with boundary type $\kappa$ and $-\kappa$ 
given by the following marked cell decomposition:
\begin{equation}
	\tikzfig{cylinder-in-in}\ .
	\label{eq:cylinders}
\end{equation}
Let 
\begin{align}
	T(\kappa,u):=C^\mathrm{in}(\kappa,0)\circ C^\mathrm{out}(\kappa,u)
	\label{eq:def-r-spin-torus}
\end{align}
be the $r$-spin torus obtained from the composition of the cylinders in \eqref{eq:cylinders},
see also \cite[Sect.\,3.1.2]{Stern:2020oc} for the composition of $r$-spin bordisms.

\begin{proof}[Proof of Lemma~\ref{lem:torus-rel}]
	We start by proving \eqref{eq:lem:torus-rel:1}.
	Let us consider the equation in $\SKK{2}{\spin{r}}$:
	\begin{align}
		\tikzfig{torus-rel-1}
		\label{eq:pf:lem:torus-rel:1}
	\end{align}
	The first term on the right hand side 
	of \eqref{eq:pf:lem:torus-rel:1} is
	\begin{align}
		\tikzfig{torus-rel-1-a}
		\label{eq:pf:lem:torus-rel:1-a}
	\end{align}
	with $v=u_1-u_2$,
	where we used the moves of \cite[Fig.\,4\,and\,6]{Runkel:2018rs}.
	The second term on the right hand side 
	of \eqref{eq:pf:lem:torus-rel:1} is
	\begin{align}
		\tikzfig{torus-rel-1-b}
		\label{eq:pf:lem:torus-rel:1-b}
	\end{align}
	with $w=u_3+u_4$. Finally, using the $SL(2,\Zb)$ action on 
	the set of isomorphism classes of $r$-spin structures on the torus,
	we have $[T(-\kappa,u_2)]=[T(\kappa,-u_2)]$. Putting all the above together
	we obtain \eqref{eq:lem:torus-rel:1}.

	\medskip
	
	We continue with proving \eqref{eq:lem:torus-rel:2}.
	Pick arbitrary $r$-spin structures on $\Sigma_{1,1}^\mathrm{in}$ and
	on $\Sigma_{1,1}^\mathrm{out}$.
	Now consider the equation in $\SKK{2}{\spin{r}}$:
	\begin{align}
		\begin{aligned}
		&\left( \Sigma_{0,1}^\mathrm{in}\sqcup \Sigma_{1,1}^\mathrm{in}\sqcup C^\mathrm{in}(1,0) \right)\circ
		\left( \Sigma_{1,1}^\mathrm{out}\sqcup \Sigma_{0,1}^\mathrm{out}\sqcup C^\mathrm{out}(1,u) \right)\\
		=&
		\left( C^\mathrm{in}(1,0) \sqcup \Sigma_{0,1}^\mathrm{in}\sqcup \Sigma_{1,1}^\mathrm{in} \right)\circ
		\left( C^\mathrm{out}(1,u) \sqcup \Sigma_{1,1}^\mathrm{out}\sqcup \Sigma_{0,1}^\mathrm{out} \right)\ ,
		\end{aligned}
		\label{eq:pf:lem:torus-rel:2-a}
	\end{align}
	where we cut along $S^1_{-1}\sqcup S^1_{1}\sqcup S^1_{-1}\sqcup S^1_{1}$.
	Notice, since up to isomorphism there is a unique $r$-spin structure on the disc
	the two leftmost $r$-spin tori of the left hand side are
	equal to the two $r$-spin tori on the right hand side, giving \eqref{eq:lem:torus-rel:2} directly.

	Finally we compute the $k$-invariant of the $r$-spin circle 
	$[S^1_{\kappa}]$ for $\kappa\in\Zb/r$. The duality morphisms for $S^1_{\kappa}$
	are $C^\mathrm{in}(\kappa,0)$ and $C^\mathrm{out}(\kappa,0)$ from 
	\eqref{eq:cylinders}.
	The composition 
	\begin{align}
		k([S^1_{\kappa}])=
		[C^\mathrm{in}(\kappa,0)\circ\sigma_{S^1_{-\kappa},S^1_{\kappa}}
		\circ C^\mathrm{out}(\kappa,0)]
		\label{eq:k-inv-circle}
	\end{align}
	is an $r$-spin torus $[T(\kappa,0)]$ after applying a Dehn-twist.

\end{proof}

\phantomsection
\addcontentsline{toc}{section}{References}

\end{document}